\theoremstyle{plain}
\theoremstyle{definition}
\theoremstyle{example}
\numberwithin{equation}{section}
\newtheorem{Theorem}{Theorem}[section]
\newtheorem{Proposition}[Theorem]{Proposition} 
\newtheorem{Lemma}[Theorem]{Lemma}
\newtheorem{Definition}[Theorem]{Definition}
\theoremstyle{definition}
\newtheorem{Comment}{Comment}
\newtheorem{Example}[Theorem]{Example}
\newcommand{\arxiv}[1]{\href{http://arxiv.org/abs/#1}{\tt arXiv:\nolinkurl{#1}}}
\def\asl{\widehat{\mathfrak{sl}}}
\newcommand{\bz}{\Bbb{Z}}
\newcommand{\bc}{\mathbb{C}}
\newcommand{\bq}{\Bbb{Q}}
\newcommand{\Fock}{{\bf F}}
\newcommand{\val}{\text{val}}
\newcommand{\ev}{\text{ev}}
\newcommand{\se}{\text{ev}}
\newcommand{\Id}{\text{Id}}
\def\cA{\mathcal{A}}
\def\cR{\mathcal{R}}
\def\KK{\mathbb{K}}
\def\ZZ{\mathbb{Z}}
\def\fh{\mathfrak{h}}
\def\fgl{\mathfrak{gl}}
\def\fsl{\mathfrak{sl}}
\def\dim{\mathrm{dim}}
\renewcommand{\@makefnmark}{\mbox{\textsuperscript{}}}
\title{Universal Verma modules and the Misra-Miwa Fock space}
\author{Arun Ram}
\email{aram@unimelb.edu.au}
\address{
Department of  Mathematics and Statistics \\
University of Melbourne \\
Parkville VIC 3010 Australia}
\address{
Department of Mathematics\\
University of Wisconsin\\
Madison, WI 53706 }
\author{Peter Tingley}
\email{ptingley@math.mit.edu}
\address{
MIT Department of  Mathematics \\
77 Massachusetts Ave \\
Cambridge, MA, USA 02139 }
\date{Feb 2, 2010}
\begin{document}

\begin{abstract}
The Misra-Miwa $v$-deformed Fock space is a representation of the quantized affine algebra 
$U_v(\asl_\ell)$. It has a standard basis indexed by partitions and the non-zero matrix entries of the 
action of the Chevalley generators with respect to this basis are powers of $v$. Partitions also index the polynomial Weyl modules for $U_q(\fgl_N)$ as $N$ tends to infinity. We explain how the powers of $v$ which appear in the Misra-Miwa Fock space also appear naturally in the context of Weyl modules. The main tool we use is the Shapovalov determinant for a universal Verma module.

\end{abstract}

\maketitle



\footnote{AMS Subject Classifications: Primary 17B37; Secondary 20G42.}

\section{Introduction}

Fock space is an infinite dimensional vector space which is a representation of several important 
algebras, as described in, for example, \cite[Chapter 14]{Kac:1990}.  Here we consider the charge zero part of Fock space, which we denote by $\Fock$, and its $v$-deformation $\Fock_v$. The space $\Fock$ has a standard $\bq$-basis $\{\, |\lambda\rangle\ |\ \hbox{$\lambda$ is a partition}\}$ and $\Fock_v := \Fock \otimes_\bq \bq(v)$. Following Hayashi \cite{Hayashi:1990}, Misra and Miwa \cite{MM:1990} define an action of the quantized universal enveloping algebra $U_v(\asl_\ell)$ on $\Fock_v$. The only non-zero matrix elements $\langle \mu| F_{\bar i} | \lambda \rangle$ of the Chevalley generators $F_{\bar i}$ in terms of the standard basis occur when $\mu$ is obtained by adding a single $\bar{i}$-colored box to $\lambda$, and these are powers of $v$.

We show that these powers of $v$ also appear naturally in the following context: Partitions with at most $N$ parts index polynomial Weyl modules $\Delta(\lambda)$ for the integral quantum group $U_q^\cA(\fgl_N)$. Let $V$ be the standard $N$ dimensional representation of  $U_q^\cA(\fgl_N)$. If the matrix element
$\langle \mu| F_{\bar i} | \lambda \rangle$ is non-zero then, for sufficiently large $N$, $\left( \Delta^\cA(\lambda) \otimes_\cA V\right)  \otimes_\cA \bq(q)$ contains a highest weight vector of weight $\mu$. There is a unique such highest weight vector $v_\mu$ which satisfies a certain triangularity condition with respect to an integral basis of $\Delta^\cA(\lambda) \otimes_\cA V$. We show that the matrix element $\langle \mu| F_{\bar i} | \lambda \rangle$ is equal to $v^{\val_{\phi_{2\ell}}(v_\mu, v_\mu)}$, 
where  $(\cdot, \cdot)$ is the Shapovalov form and $\val_{\phi_{2\ell}}$ is 
the valuation at the cyclotomic polynomial $\phi_{2\ell}$. 

Our proof is computational, making use of the Shapovalov determinant \cite{Shapovalov:1972, KD:1991, KL:1997}. This is a formula for the determinant of the Shapovalov form on 
a weight space of a Verma module.  The necessary computation is most easily done 
in terms of the universal Verma modules introduced in the classical case by Kashiwara \cite{Kashiwara:1985} and studied in the quantum case by Kamita \cite{Kamita}. The statement for Weyl modules is then a straightforward consequence. 

Before beginning, let us discuss some related work. 
In \cite{Kleshchev:1995}, Kleshchev carefully analyzed the $\fgl_{N-1}$ highest weight 
vectors in a Weyl module for $\fgl_N$, and used this information to give modular branching rules for symmetric group representations.    
Brundan and Kleshchev \cite{BK2} have explained that highest weight vectors in 
the restriction of a Weyl module to $\fgl_{N-1}$ give information about highest weight vectors
in a tensor product $\Delta(\lambda)\otimes V$ of a Weyl module with the standard
$N$-dimensional representation of $\fgl_N$.   Our computations 
put a new twist on the analysis of the highest weight vectors 
in $\Delta(\lambda)\otimes V$, as we study them
in their  ``universal'' versions and by the use of the Shapovalov determinant. Our techniques can be viewed as an application of the theory of 
Jantzen \cite{Jantzen:1974} as extended to the quantum case by Wiesner \cite{Wiesner}.

Brundan \cite{Brundan:1998} generalized
Kleshchev's \cite{Kleshchev:1995} techniques and used this information to give modular branching rules for Hecke algebras.  As discussed in \cite{Ariki:1996, LLT:1996}, these branching rules are reflected in the fundamental representation of 
 $\asl_p$ and its crystal graph, recovering much of the structure of the Misra-Miwa Fock space. Using Hecke algebras at a root of unity, Ryom-Hansen \cite{Ryom-Hansen:2004} recovered the full $U_v(\asl_\ell)$ action on Fock space. To complete the picture one should construct a graded category, where multiplication by $v$ in the $\asl_\ell$ representation corresponds to a grading shift. 
Recent work of Brundan-Kleshchev \cite{BrKl:2009} and  Ariki \cite{Ariki:2009} explains that one solution to this problem is through the representation theory of Khovanov-Lauda-Rouquier algebras \cite{KhLa:2009, Rouquier:2009}. It would be interesting to explicitly describe the relationship between their category and the present work. Another related construction due to Brundan-Stroppel considers the case when the Fock space is replaced by $\wedge^m V\otimes \wedge^n V$, where $V$ is the natural $\mathfrak{gl}_\infty$ module and $m, n$ are fixed natural numbers.

We would also like to mention very recent work of  Peng Shan \cite{Shan:2010} which independently develops a similar story to the one presented here, but  using representations of a quantum Schur algebra where we use representations of $U_\varepsilon(\fgl_N)$. The approach taken there is somewhat different, and in particular relies on localization techniques of Beilinson and Bernstein \cite{BB:1993}. 

This paper is arranged as follows. Sections \ref{qua-group} and \ref{pFock} are background on the quantum group $U_q(\fgl_N)$ and the Fock space $\Fock_v$. Sections \ref{UV-section} and \ref{Sform} explain universal Verma modules and the Shapovalov determinant. Section \ref{makeops} contains the statement and proof of our main result relating Fock space and Weyl modules. 

\subsection{Acknowledgments}

We thank M. Kashiwara, A. Kleshchev, T. Tanisaki, R. Virk and B. Webster for helpful discussions. 
The first author was partly supported by NSF Grant DMS-0353038 and Australian Research Council Grants DP0986774 and DP0879951. The second author was partly supported by the Australia Research Council grant DP0879951 and NSF grant DMS-0902649.

\section{The quantum group $U_q(\fgl_N)$ and its integral form $U_q^\cA(\fgl_N)$} \label{qua-group}

This is a very brief review, intended mainly to fix notation. With slight modifications the construction in this section works in the generality of symmetrizable Kac-Moody algebras. See \cite[Chapters 6 and 9]{CP} for details. 

\subsection{The rational quantum group
} \label{the_q_groups}
$U_q(\fgl_N)$ is the associative algebra over the field of rational 
functions $\bq(q)$ generated by 
\begin{equation}
X_1,\ldots, X_{N-1}, \quad
Y_1,\ldots, Y_{N-1},\quad\hbox{and}\quad
L_1^{\pm1},\ldots, L_N^{\pm1},
\end{equation}
with relations
$$L_iL_j=L_jL_i, \quad 
L_iL_i^{-1}= L_i^{-1}L_i =1,
\qquad
X_i Y_j - Y_j X_i = \delta_{i,j} \frac{L_iL_{i+1}^{-1} - L_{i+1} L_i^{-1}}{q-q^{-1}},
$$ 
\begin{equation} L_i X_j L_i^{-1}=
\begin{cases} qX_j, \quad \text{ if } i=j, \\
q^{-1}X_j, \quad \text{ if } i=j+1, \\
X_j \quad \text{ otherwise};
\end{cases}
\qquad\qquad
L_i Y_j L_i^{-1}=
\begin{cases} q^{-1}Y_j, \quad \text{ if } i=j, \\
q Y_j, \quad \text{ if } i=j+1, \\
Y_j, \quad \text{ otherwise};
\end{cases}
\end{equation}
$$X_iX_j = X_j X_i\quad\hbox{and}\quad Y_i Y_j=Y_jY_i,
\qquad\hbox{if $|i-j| \geq 2$,}$$
$$
X_i^{2} X_j -(q+q^{-1}) X_iX_jX_i + X_j X_i^2 = Y_i^{2} Y_j -(q+q^{-1})Y_iY_jY_i + Y_j Y_i^2=0, 
\qquad
\hbox{if $|i-j|=1$.}
$$
The algebra $U_q(\fgl_N)$ is a Hopf algebra with coproduct and antipode given by
\begin{equation} \label{coproduct-antipode}
\begin{aligned}
& \Delta(L_i) = L_i\otimes L_i, \\
 &\Delta(X_i)= X_i \otimes L_iL_{i+1}^{-1}+ 1 \otimes X_i, \\
& \Delta(Y_i)= Y_i \otimes 1+ L_i^{-1}L_{i+1} \otimes Y_i,
 \end{aligned}
 \qquad\hbox{and}\qquad
\begin{aligned}
& S(L_i) = L_i^{-1}, \\ 
 &S(X_i)=  -X_i L_i^{-1}L_{i+1}, \\
 & S(Y_i)= -L_iL_{i+1}^{-1}Y_i,
 \end{aligned}
\end{equation}
respectively (see  \cite[Section 9.1]{CP}).

As a $\bq(q)$-vector space,  $U_q(\fgl_N)$ has a triangular decomposition 
\begin{equation} \label{rat-triangle}
U_q(\fgl_N) \cong  U_q(\fgl_N)^{<0} \otimes U_q(\fgl_N)^0 \otimes  U_q(\fgl_N)^{>0},
\end{equation}
where the inverse isomorphism is given by multiplication (see \cite[Proposition 9.1.3]{CP}). Here $U_q(\fgl_N)^{<0}$ is the subalgebra generated by the $Y_i$ for $i=1, \ldots, N-1$,  $U_q(\fgl_N)^{>0}$ is the subalgebra generated by the $X_i$ for $i=1, \ldots, N-1,$ and $U_q(\fgl_N)^0$ is the subalgebra generated by the $L_i^{\pm1}$ for $i=1, \ldots, N$.

\subsection{The integral quantum group} \label{res-form}
Let $\cA= \bz[q, q^{-1}]$. For $n,k \in \bz_{>0}$ and $c \in  \bz$, let
\begin{align}
 [n]:= \frac{q^n -q^{-n}}{q-q^{-1}},
 \;
x^{(k)}:=\frac{x^k}{[k] [k-1] \cdots [2][1]},
\; \hbox{and}\;
\left[ 
\begin{array}{c}
x; c \\
k
\end{array}
\right] :=
\prod_{s=1}^k \frac{xq^{c+1-s}-x^{-1}q^{s-1-c}}{q^s-q^{-s}},
\end{align}
in $\bq(q,x)$.
The \emph{restricted integral form} $U_q^{\cA}(\fgl_N)$ is the $\cA$-subalgebra of $U_q(\fgl_N)$ generated by $X_i^{(k)}, Y_i^{(k)}$, $L_i^{\pm 1}$ and $\left[ 
\begin{array}{c}
L_i; c \\
k
\end{array}
\right]$ for $1 \leq i \leq N, c \in \bz, k \in \bz_{>0}$.
As discussed in \cite[Section 6]{Lusztig:1990}, this is an integral form in the sense that 
\begin{equation} \label{int-form}
U_q^\cA(\fgl_N) \otimes_{\cA} \bq(q) = U_q(\fgl_N).
\end{equation} 
As with $U_q(\fgl_N)$, the algebra $U_q^\cA(\fgl_N)$ has a triangular decomposition
\begin{equation}  \label{int-triangle}
U_q^\cA(\fgl_N) \cong U_q^\cA(\fgl_N)^{<0} \otimes U_q^\cA(\fgl_N)^0 \otimes U_q^\cA(\fgl_N)^{>0},
\end{equation}
where the isomorphism is given by multiplication (see \cite[Proposition 9.3.3]{CP}).
In this case, $U_q^\cA(\fgl_N)^{<0}$ is the subalgebra generated by the $Y_i^{(k)}$, $U_q^\cA(\fgl_N)^{>0}$ is the subalgebra generated by the $X_i^{(k)}$, and $U_q^\cA(\fgl_N)^0$ is generated by $L_i^{\pm 1}$ and $\displaystyle \left[ 
\begin{array}{c}
L_i; c \\
k
\end{array}
\right]$
for $1 \leq i \leq N$, $c \in \bz$, and $ k \in \bz_{>0}$.

\subsection{Rational representations} \label{rational-reps} 
The Lie algebra $\fgl_N = M_N(\bc)$ of $N\times N$ matrices has standard basis 
$\{ E_{ij} \ |\ 1\le i,j\le N\},$ where $E_{ij}$ 
is the matrix with $1$ in position $(i,j)$ and $0$ everywhere else. 
Let $\fh= \text{span} \{ E_{11}, E_{22}, \ldots, E_{NN} \}$. Let  $\varepsilon_i \in \fh^*$ be the weight 
of $\fgl_N$ given by $\varepsilon_i(E_{jj})= \delta_{i,j}$. Define
\begin{equation}
\begin{aligned}
&\fh^*_\bz := \{ \lambda=\lambda_1\varepsilon_1+\lambda_2 \varepsilon_2 + \cdots 
+ \lambda_N \varepsilon_N \in \fh^* \ |\ \lambda_1,\ldots, \lambda_N\in \bz\}, \\
&(\fh^*_\bz)^+ 
:= \{ \lambda=\lambda_1\varepsilon_1+\lambda_2 \varepsilon_2 + \cdots + \lambda_N \varepsilon_N
\in \fh_\bz^* \ |\ \lambda_1 \geq \lambda_2 \geq \cdots \geq \lambda_N
\}, \\
&P^+ 
:= \{ \lambda=\lambda_1\varepsilon_1+\lambda_2 \varepsilon_2 + \cdots + \lambda_N \varepsilon_N
\in (\fh_\bz^*)^+ \ |\ \lambda_N\ge 0\}, \\
&R^+ :=\{ \varepsilon_i-\varepsilon_j\ |\ 1 \leq i < j \leq N \}, \\
&Q:= \text{span}_\bz(R^+),  \quad
Q^+:= \text{span}_{\bz_{\ge0}}(R^+), \quad \text{and}\quad Q^-:= \text{span}_{\bz_{\le0}}(R^+).
\end{aligned}
\end{equation}
to be the set of \emph{integral weights}, the set of \emph{dominant integral weights}, the set of \emph{dominant polynomial weights}, the set of \emph{positive roots}, the \emph{root lattice}, the \emph{positive part of the root lattice}, and the the \emph{negative  part of the root lattice}, respectively. 

For an integral weight $\lambda= \lambda_1 \varepsilon_1 + \cdots+\lambda_N\varepsilon_N$, 
the \emph{Verma module} $M(\lambda)$ for $U_q(\fgl_N)$ of highest weight $\lambda$ is 
\begin{equation}
M(\lambda) := U_q(\fgl_N) \otimes_{U_q(\fgl_N)^{\geq 0}} \bq(q)_\lambda,
\end{equation}
where
$\bq(q)_\lambda= \text{span}_{\bq(q)} \{ v_{\lambda} \}$ is the one dimensional vector space over 
$\bq(q)$ with $U_q(\fgl_N)^{\geq 0}$ action given by
\begin{equation}
X_i \cdot v_\lambda =0 \quad\hbox{and}\quad
L_j \cdot v_\lambda =q^{\lambda_j}v_\lambda,
\qquad\hbox{for $1 \leq i \leq N-1$, $1 \leq j \leq N$.}
\end{equation}

\begin{Theorem} \label{Deltaconst} \emph{(see \cite[Chapter 10.1]{CP})}
If $\lambda\in (\fh_\ZZ^*)^+$ then $M(\lambda)$ has a unique finite dimensional quotient 
$\Delta(\lambda)$ and the map $\lambda\mapsto \Delta(\lambda)$ is a bijection between
$(\fh_\ZZ^*)^+$ and the set of isomorphism classes of irreducible finite dimensional $U_q(\fgl_N)$-modules. \end{Theorem}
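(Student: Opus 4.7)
The plan is to mimic the classical proof for $U(\fgl_N)$, making the standard adjustments needed for the quantum group. First, by the triangular decomposition \eqref{rat-triangle}, the Verma module $M(\lambda) = U_q(\fgl_N)^{<0} \cdot v_\lambda$ is free of rank one over $U_q(\fgl_N)^{<0}$ and carries a weight-space decomposition $M(\lambda) = \bigoplus_{\mu \in \lambda + Q^-} M(\lambda)_\mu$ with $\dim M(\lambda)_\lambda = 1$. Any proper submodule has weights strictly less than $\lambda$, so the sum of all proper submodules is still proper; therefore $M(\lambda)$ admits a unique maximal proper submodule $N(\lambda)$ and hence a unique irreducible quotient $L(\lambda) := M(\lambda)/N(\lambda)$.

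Next I would produce a finite-dimensional quotient when $\lambda \in (\fh^*_\ZZ)^+$. Set $m_i = \lambda_i - \lambda_{i+1} \geq 0$. For $j \neq i$ the defining relations of $U_q(\fgl_N)$ give $X_j Y_i = Y_i X_j$, so $X_j Y_i^{m_i+1} v_\lambda = 0$; and a direct $U_q(\fsl_2)$-calculation inside the subalgebra generated by $X_i, Y_i, L_i L_{i+1}^{-1}$ shows $X_i Y_i^{m_i+1} v_\lambda = 0$ as well. Hence each $Y_i^{m_i+1} v_\lambda$ is singular, and the submodule $J(\lambda)$ they generate is proper. Define $\Delta(\lambda) := M(\lambda)/J(\lambda)$. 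By construction the Chevalley generators act locally nilpotently on $\Delta(\lambda)$, making the action of each $U_q(\fsl_2)$-triple integrable. Combined with the fact that the weights of $\Delta(\lambda)$ lie in $\lambda + Q^-$, integrability forces the weight support to be contained in the convex hull of the Weyl orbit of $\lambda$, with each weight space of bounded dimension; hence $\Delta(\lambda)$ is finite-dimensional.

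To finish, I would establish irreducibility, uniqueness, and bijectivity. Complete reducibility of finite-dimensional $U_q(\fgl_N)$-modules at generic $q$ (see \cite[Chapter 10]{CP}) forces $\Delta(\lambda)$ to split as a direct sum of irreducibles; since it is cyclically generated by $v_\lambda$ and $\dim \Delta(\lambda)_\lambda = 1$, only one summand appears and $\Delta(\lambda) = L(\lambda)$ is irreducible. Uniqueness of the finite-dimensional quotient of $M(\lambda)$ follows: any such quotient $V$ satisfies $Y_i^{m_i+1} v_\lambda = 0$ by integrability, so $V$ factors through $\Delta(\lambda)$ and equals it. Conversely, in any finite-dimensional irreducible $V$, the joint $L_i$-eigenvalues form a finite set, so a maximal weight $\lambda$ yields a highest weight vector; integrability forces $\lambda \in (\fh^*_\ZZ)^+$, and the induced map $\Delta(\lambda) \to V$ is an isomorphism by irreducibility. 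This gives the bijection.

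The main obstacle is the middle step: verifying that each $Y_i^{m_i+1} v_\lambda$ really is singular (the case $j=i$ requiring a $q$-binomial identity inside the relevant $U_q(\fsl_2)$-subalgebra) and extracting finite-dimensionality from integrability in the quantum setting. The complete reducibility invoked in the last step likewise depends essentially on the genericity of $q$ over $\bq(q)$ and fails at roots of unity. Once these facts are in place, the remaining assertions — existence of a unique maximal submodule, uniqueness of the finite-dimensional quotient, and surjectivity of $\lambda \mapsto \Delta(\lambda)$ — are formal consequences of the weight-space decomposition afforded by the triangular decomposition.
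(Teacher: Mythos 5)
The paper does not actually prove this statement; it is quoted from \cite[Chapter 10.1]{CP}, so there is no internal argument to compare yours against. Your proposal is the standard highest-weight-theory proof and is correct in outline: the triangular decomposition gives the weight decomposition of $M(\lambda)$ and the unique maximal proper submodule; the commutator identity $X_iY_i^{n}v_\lambda=[n][\lambda_i-\lambda_{i+1}-n+1]\,Y_i^{n-1}v_\lambda$, together with $X_jY_i=Y_iX_j$ for $j\neq i$, shows that the vectors $Y_i^{m_i+1}v_\lambda$ are singular; local nilpotence plus Weyl-group invariance of the weight multiset gives finite-dimensionality; and generic-$q$ complete reducibility identifies $M(\lambda)/J(\lambda)$ with the irreducible head. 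The same identity, read in reverse, gives uniqueness of the nonzero finite-dimensional quotient, since $[n][m_i-n+1]\neq 0$ over $\bq(q)$ unless $n=m_i+1$.

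Two points deserve more care than you give them. First, ``by construction the Chevalley generators act locally nilpotently on $\Delta(\lambda)$'' is not quite by construction: you only know that $Y_i^{m_i+1}$ kills the cyclic generator $v_\lambda$, and you need the standard lemma that the set of vectors on which $Y_i$ acts nilpotently is a $U_q(\fgl_N)$-submodule (this uses that $\mathrm{ad}\,Y_i$ is locally nilpotent on $U_q(\fgl_N)$, via the quantum Serre relations). Second, in the converse direction, a highest weight vector $v$ of an arbitrary finite-dimensional irreducible module only satisfies $(L_iL_{i+1}^{-1})\cdot v=\pm q^{m_i}v$ with $m_i\ge 0$; the honest classification is by dominant integral weights together with a sign twist, and the stated bijection holds for type-$1$ modules. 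This caveat is already implicit in the paper's formulation (and is dealt with in \cite{CP}), so it is not a defect peculiar to your argument, but your phrase ``integrability forces $\lambda\in(\fh_\ZZ^*)^+$'' is exactly where it hides. Neither point is a fatal gap; both are handled by the standard arguments you are implicitly invoking.
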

 
A \emph{singular vector} in a representation of $U_q(\fgl_N)$ is a vector $v$ such that $X_i \cdot v=0$ for all $i$.

 \subsection{Integral representations}
The \emph{integral Verma module} $M^\cA(\lambda)$ is the $U_q^{\cA}(\fgl_N)$-submodule
of $M(\lambda)$ generated by $v_\lambda$. The \emph{integral Weyl module} $\Delta^\cA(\lambda)$ 
is the $U_q^{\cA}(\fgl_N)$-submodule of $\Delta(\lambda)$ generated by $v_\lambda$. 
Using \eqref{int-form} and \eqref{rat-triangle}, 
\begin{equation}
M^{\cA} (\lambda) \otimes_{\cA} \bq(q)= M(\lambda),
\quad \text{and} \quad
\Delta^{\cA}(\lambda) \otimes_{\cA} \bq(q)=\Delta(\lambda).
\end{equation}
In general, $\Delta^{\cA}(\lambda)$ is not irreducible as a $U^\cA_q(\fgl_N)$ module.

\section{Partitions and Fock space} \label{pFock}

We now describe the $v$-deformed Fock space representation of $U_v(\asl_\ell)$ constructed by  Misra and Miwa \cite{MM:1990} following work of Hayashi \cite{Hayashi:1990}. Our presentation largely follows  \cite[Chapter 10]{ariki:2000}. 

\subsection{Partitions}

A partition $\lambda$ is a finite length non-increasing sequence of positive integers. Associated to a partition is its Ferrers diagram. We draw these diagrams as in Figure \ref{partition_bij} so that,
if $\lambda = (\lambda_1,\ldots, \lambda_N)$, then $\lambda_i$ is the number of boxes in row $i$
(rows run southeast to northwest 
$\nwarrow$ 
).  Say that $\lambda$ is contained in $\mu$ if the diagram 
for $\lambda$ fits inside the diagram for $\mu$ and let $\mu/\lambda$ be the collection of boxes 
of $\mu$ that are not in $\lambda$. 
For each box $b \in \lambda$, the \emph{content} $c(b)$ is the horizontal position of $b$
and the \emph{color} $\overline c(b)$ is the residue of $c(b)$ modulo $\ell$.
In Figure \ref{partition_bij}, the numbers $c(b)$ are listed below the diagram.  
The \emph{size} $|\lambda|$ of a partition $\lambda$ is the total number of boxes in its Ferrers diagram.

The set $P^+$ of dominant polynomial weights from Section \ref{rational-reps} is naturally identified with partitions with at most $N$ parts.  
If $\lambda \in P^+$ then
\begin{equation} \label{plusboxes}
\Delta(\lambda) \otimes \Delta(\varepsilon_1) \cong 
\bigoplus_{\tiny \begin{array}{l} 1 \leq k \leq N  \\ \lambda+\varepsilon_k \in P^+
\end{array} } \Delta(\lambda+\varepsilon_k)
\end{equation}
as $U_q(\fgl_N)$-modules. The diagram of $\lambda+\varepsilon_k$ is obtained from the diagram of $\lambda$ by adding a box on row $k$, and $\Delta(\lambda+\varepsilon_k)$ appears in the sum on the right side of \eqref{plusboxes} if and only if $\lambda+\varepsilon_k$ is a partition.
See, for example, \cite[Section 6.1, Formula 6.8]{FH:1991} for the classical statement, and \cite[Proposition 10.1.16]{CP} for the quantum case.

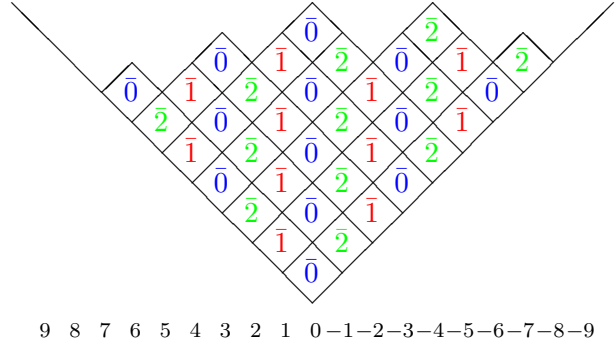
\begin{figure}

\setlength{\unitlength}{0.4cm}
\begin{center}
\begin{picture}(20,12)
\put(0,1){
\begin{picture}(20,11)

\put(10,0){\line(1,1){10}}
\put(9,1){\line(1,1){8}}
\put(8,2){\line(1,1){7}}
\put(7,3){\line(1,1){7}}
\put(6,4){\line(1,1){5}}
\put(5,5){\line(1,1){5}}
\put(4,6){\line(1,1){3}}
\put(3,7){\line(1,1){1}}

\put(10,0){\line(-1,1){10}}
\put(11,1){\line(-1,1){7}}
\put(12,2){\line(-1,1){6}}
\put(13,3){\line(-1,1){6}}
\put(14,4){\line(-1,1){5}}
\put(15,5){\line(-1,1){5}}
\put(16,6){\line(-1,1){3}}
\put(17,7){\line(-1,1){3}}
\put(18,8){\line(-1,1){1}}

\put(3.4,6.7){{ \color{blue} ${\bar 0}$}}

\put(4.4,5.7){{ \color{green} ${\bar 2}$}}
\put(5.4,6.7){{ \color{red} ${\bar 1}$}}
\put(5.4,4.7){{ \color{red} ${\bar 1}$}}
\put(6.4,7.7){{ \color{blue} ${\bar 0}$}}
\put(6.4,5.7){{ \color{blue} ${\bar 0}$}}
\put(6.4,3.7){{ \color{blue} ${\bar 0}$}}
\put(7.4,6.7){{ \color{green} ${\bar 2}$}}
\put(7.4,4.7){{ \color{green} ${\bar 2}$}}
\put(7.4,2.7){{ \color{green} ${\bar 2}$}}
\put(8.4,7.7){{ \color{red} ${\bar 1}$}}
\put(8.4,5.7){{ \color{red} ${\bar 1}$}}
\put(8.4,3.7){{ \color{red} ${\bar 1}$}}
\put(8.4,1.7){{ \color{red} ${\bar 1}$}}
\put(9.4,8.7){{ \color{blue} ${\bar 0}$}}
\put(9.4,6.7){{ \color{blue} ${\bar 0}$}}
\put(9.4,4.7){{ \color{blue} ${\bar 0}$}}
\put(9.4,2.7){{ \color{blue} ${\bar 0}$}}
\put(9.4,0.7){{ \color{blue} ${\bar 0}$}}
\put(10.4,7.7){{ \color{green} ${\bar 2}$}}
\put(10.4,5.7){{ \color{green} ${\bar 2}$}}
\put(10.4,3.7){{ \color{green} ${\bar 2}$}}
\put(10.4,1.7){{ \color{green} ${\bar 2}$}}
\put(11.4,6.7){{ \color{red} ${\bar 1}$}}
\put(11.4,4.7){{ \color{red} ${\bar 1}$}}
\put(11.4,2.7){{ \color{red} ${\bar 1}$}}
\put(12.4,7.7){{ \color{blue} ${\bar 0}$}}
\put(12.4,5.7){{ \color{blue} ${\bar 0}$}}
\put(12.4,3.7){{ \color{blue} ${\bar 0}$}}
\put(13.4,8.7){{ \color{green} ${\bar 2}$}}
\put(13.4,6.7){{ \color{green} ${\bar 2}$}}
\put(13.4,4.7){{ \color{green} ${\bar 2}$}}
\put(14.4,7.7){{ \color{red} ${\bar 1}$}}
\put(14.4,5.7){{ \color{red} ${\bar 1}$}}
\put(15.4,6.7){{ \color{blue} ${\bar 0}$}}
\put(16.4,7.7){{ \color{green} ${\bar 2}$}}

\put(9.6,-1){ $\tiny{{}_{0}}$}
\put(10.1,-1){ $\tiny{{}_{-1}}$}
\put(11.1,-1){ $\tiny{{}_{-2}}$}
\put(12.1,-1){ $\tiny{{}_{-3}}$}
\put(13.1,-1){ $\tiny{{}_{-4}}$}
\put(14.1,-1){ $\tiny{{}_{-5}}$}
\put(15.1,-1){ $\tiny{{}_{-6}}$}
\put(16.1,-1){ $\tiny{{}_{-7}}$}
\put(17.1,-1){ $\tiny{{}_{-8}}$}
\put(18.1,-1){ $\tiny{{}_{-9}}$}

\put(8.6,-1){ $\tiny{{}_{1}}$}
\put(7.6,-1){ $\tiny{{}_{2}}$}
\put(6.6,-1){ $\tiny{{}_{3}}$}
\put(5.6,-1){ $\tiny{{}_{4}}$}
\put(4.6,-1){ $\tiny{{}_{5}}$}
\put(3.6,-1){ $\tiny{{}_{6}}$}
\put(2.6,-1){ $\tiny{{}_{7}}$}
\put(1.6,-1){ $\tiny{{}_{8}}$}
\put(0.6,-1){ $\tiny{{}_{9}}$}

\thicklines

\end{picture}}
\end{picture}
\end{center}
\vspace{0.4cm}

\caption{The partition $(7,6,6,5,5,3,3,1)$ 
with each box containing its color for $\ell =3$. 
The content $c(b)$ of a box $b$ is the horizontal position of $b$ reading right to left. The contents
of boxes are listed beneath the diagram so that $c(b)$ is aligned with all boxes $b$ of that content.
\label{partition_bij}}
\end{figure}

\subsection{The quantum affine algebra}
Let $U'_v(\asl_\ell)$ be the quantized universal enveloping algebra corresponding to the $\ell$-node Dynkin diagram 
\vspace{0.2cm}
\begin{center}
\setlength{\unitlength}{0.4cm}

\begin{picture}(6,2)

\put(-1,0){\circle*{0.5}}
\put(0,0){\circle*{0.5}}
\put(1,0){\circle*{0.5}}
\put(5,0){\circle*{0.5}}
\put(6,0){\circle*{0.5}}
\put(7,0){\circle*{0.5}}

\put(3,2){\circle*{0.5}}

\put(3,2){\line(2,-1){4}}
\put(3,2){\line(-2,-1){4}}

\put(-1,0){\line(1,0){1}}
\put(0,0){\line(1,0){1}}
\put(1,0){\line(1,0){1}}
\put(4,0){\line(1,0){1}}
\put(5,0){\line(1,0){1}}
\put(6,0){\line(1,0){1}}

\put(2.6,0){\ldots}

\end{picture}
\end{center}
\vspace{0.1cm}
More precisely,  $U'_v(\asl_\ell)$ is the algebra generated by $ E_{\bar i}, F_{\bar i}, 
K_{\bar i}^{\pm1}$, for $\bar i \in \bz/\ell \bz$, with relations
$$K_{\bar i}K_{\bar j}=K_{\bar j}K_{\bar i}, \quad 
K_{\bar i}K_{\bar i}^{-1}= K_{\bar i}^{-1}K_{\bar i} =1,
\qquad
E_{\bar i} F_{\bar j} - F_{\bar j} E_{\bar i} 
= \delta_{\bar i,\bar j} \frac{K_{\bar i} - K_{\bar i}^{-1}}{v-v^{-1}},
$$ 
\begin{equation}
K_{\bar i} E_{\bar j} K_{\bar i}^{-1}=
\begin{cases} v^2 E_{\bar j}, \quad \text{ if } \bar i=\bar j, \\
v^{-1}E_{\bar j}, \quad \text{ if } \bar i=\bar j\pm1, \\
E_{\bar j} \quad \text{ otherwise};
\end{cases}
\qquad
K_{\bar i} F_{\bar j} K_{\bar i}^{-1}=
\begin{cases} v^{-2}F_{\bar j}, \quad \text{ if } \bar i=\bar j, \\
v F_{\bar j}, \quad \text{ if } \bar i=\bar j\pm1, \\
F_{\bar j}, \quad \text{ otherwise};
\end{cases}
\end{equation}
$$E_{\bar i}E_{\bar j} = E_{\bar j} E_{\bar i}
\quad\hbox{and}\quad 
F_{\bar i} F_{\bar j}=F_{\bar j}F_{\bar i},
\qquad\hbox{if $|\bar i-\bar j| \geq 2$,}$$
$$
E_{\bar i}^{2} E_{\bar j} -(v+v^{-1}) E_{\bar i}E_{\bar j}E_{\bar i} + E_{\bar j} E_{\bar i}^2 
= F_{\bar i}^{2} F_{\bar j} -(v+v^{-1}) F_{\bar i}F_{\bar j}F_{\bar i} + F_{\bar j} F_{\bar i}^2=0, 
\qquad
\hbox{if $|\bar i-\bar j|=1$.}
$$
See \cite[Definition Proposition 9.1.1]{CP}. 
The algebra $U_v'(\asl_\ell)$ is the quantum group corresponding
to the non-trivial central extension 
$\asl_\ell' = \fsl_\ell[t,t^{-1}] \oplus \bc c$ of the algebra of polynomial loops in $\mathfrak{sl}_\ell$.

\subsection{Fock space} \label{MM_section}
Define \emph{$v$-deformed Fock space} to be the $\bq(v)$ vector space
$\Fock_v$ with basis 
$\{ |\lambda \rangle \ |\  \lambda \text{ is a partition} \}$.
Our $\Fock_v$ is only the charge $0$ part of Fock space described in \cite{KMS:1995}.
Fix $\bar i \in \bz/\ell \bz$ and partitions $\lambda \subseteq \mu$ such that $\mu / \lambda$ is a single box. Define
\begin{equation}
\begin{array}{l}
\text{$A_{\bar i}(\lambda) \hspace{-0.1cm} := \hspace{-0.1cm} \{\text{boxes } b  :  b \notin \lambda, b \text{ has color } 
\bar i \text{ and } \lambda \cup b \text{ is a partition} \},$} \\

 \text{$R_{\bar i}(\lambda) \hspace{-0.1cm} := \hspace{-0.1cm} \{\text{boxes } b: b \in \lambda,  b \text{ has color } 
\bar i \text{ and } \lambda \backslash b \text{ is a partition} \},$} \\

 \text{$N_{\bar i}^l(\mu / \lambda) \hspace{-0.1cm} :=  \hspace{-0.1cm} | \{ b \in R_{\bar i}(\lambda) :   \hspace{-0.05cm}  b \text{ to the left of } \mu / \lambda \}|
\hspace{-0.1cm} - \hspace{-0.1cm}
| \{ b \in A_{\bar i}(\lambda) :   b \text{ to the left of } \mu / \lambda \}|
,$} \\

\text{$N_{\bar i}^r(\mu / \lambda) \hspace{-0.1cm} := \hspace{-0.1cm} | \{ b \in R_{\bar i}(\lambda) : b \text{ to the right of } \mu / \lambda \}|
\hspace{-0.1cm} - \hspace{-0.1cm}
| \{ b \in A_{\bar i}(\lambda) : b \text{ to the right of  } \mu / \lambda \}|$}
\end{array}
\end{equation}
to be  the set of \emph{addable boxes of color $\bar i$}, the set of \emph{removable boxes of color $\bar i$}, the \emph{left removable-addable difference}, and the \emph{right removable-addable difference}, respectively.

\begin{Theorem}  \emph{(see \cite[Theorem 10.6]{ariki:2000})}
\label{MM_Fock_th}
There is an action of $U'_v(\asl_\ell)$ on $\Fock_v$ determined by
\begin{align} \label{MM-a}
E_{\bar i}|\lambda\rangle &:= 
\sum_{ \overline{c}(\lambda / \mu)={\bar i}} v^{-N_{\bar i}^r(\lambda / \mu )} |\mu\rangle
\qquad \quad  \hbox{and}
& F_{\bar i}|\lambda\rangle &:= \sum_{ \overline{c}(\mu / \lambda)={\bar i}} v^{N_{\bar i}^l(\mu / \lambda )} |\mu\rangle,
\end{align}
where $\overline c(\lambda / \mu)$ denotes the color of $\lambda / \mu$ and the sum is over 
partitions $\mu$ which differ from $\lambda$ by removing (respectively adding) a 
single $\bar i$-colored box.
\end{Theorem}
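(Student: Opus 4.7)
The plan is to check directly that the proposed formulas \eqref{MM-a}, together with an appropriately defined action of the $K_{\bar i}$, satisfy all of the defining relations of $U_v'(\asl_\ell)$. First I would set $K_{\bar i}|\lambda\rangle := v^{|A_{\bar i}(\lambda)|-|R_{\bar i}(\lambda)|}|\lambda\rangle$, so that the $K_{\bar i}$ are simultaneously diagonalized in the basis $\{|\lambda\rangle\}$. The commutation $K_{\bar i}K_{\bar j}=K_{\bar j}K_{\bar i}$ and $K_{\bar i}K_{\bar i}^{-1}=1$ are then immediate.

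Next I would verify the weight relations $K_{\bar i}E_{\bar j}K_{\bar i}^{-1}=v^{a_{\bar i\bar j}}E_{\bar j}$ and $K_{\bar i}F_{\bar j}K_{\bar i}^{-1}=v^{-a_{\bar i\bar j}}F_{\bar j}$ by a local bookkeeping argument. Adding a single box of color $\bar j$ to $\lambda$ changes $|A_{\bar i}(\lambda)|-|R_{\bar i}(\lambda)|$ in a way that depends only on the Cartan integer $a_{\bar i\bar j}$: adding a $\bar j$-box creates or destroys addable/removable $\bar i$-boxes only in the two positions immediately adjacent to the new box, and the effect is exactly $-a_{\bar i\bar j}$. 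This is a finite case analysis on the colors involved.

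The heart of the proof is the relation $E_{\bar i}F_{\bar j}-F_{\bar j}E_{\bar i}=\delta_{\bar i\bar j}(K_{\bar i}-K_{\bar i}^{-1})/(v-v^{-1})$. For $\bar i \neq \bar j$ the two sides of the composition are sums over partitions $\nu$ obtained from $\lambda$ by one add and one remove at different colors, and the two orders of operation give matching coefficients because the integers $N_{\bar j}^l$ and $N_{\bar i}^r$ appearing in each term are unaffected by the operation at the other color. For $\bar i=\bar j$ one must separate the terms where $E_{\bar i}$ and $F_{\bar i}$ act on the same box from those where they act on different boxes. The different-box terms cancel as in the previous case. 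The same-box terms, summed over all addable $\bar i$-boxes (for $F_{\bar i}E_{\bar i}$) and all removable $\bar i$-boxes (for $E_{\bar i}F_{\bar i}$), assemble into a telescoping sum which collapses to $(v^{|A_{\bar i}(\lambda)|-|R_{\bar i}(\lambda)|}-v^{-(|A_{\bar i}(\lambda)|-|R_{\bar i}(\lambda)|)})/(v-v^{-1})\cdot |\lambda\rangle$. This telescoping is the main obstacle, and it is done by ordering the addable and removable $\bar i$-boxes of $\lambda$ from left to right and tracking how $N_{\bar i}^l$ and $N_{\bar i}^r$ change box by box.

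Finally I would verify the Serre relations. The commutations $E_{\bar i}E_{\bar j}=E_{\bar j}E_{\bar i}$ and $F_{\bar i}F_{\bar j}=F_{\bar j}F_{\bar i}$ for $|\bar i-\bar j|\ge 2$ follow because adding or removing boxes of non-adjacent colors do not interact in the exponents $N^l_{\bar i}$, $N^r_{\bar i}$. The quadratic Serre relations for $|\bar i-\bar j|=1$ reduce, after expansion in the basis, to a finite combinatorial identity on each partition $\mu$ which can be obtained from $\lambda$ by adding two $\bar i$-boxes and one $\bar j$-box in various orders; in each case the three coefficients $v^{N_{\bar i}^l}$, $v^{N_{\bar j}^l}$ for the intermediate steps must combine, with the weights $v+v^{-1}$, to zero. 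Since at most two addable $\bar i$-boxes and one addable $\bar j$-box of $\lambda$ are involved, this is again a small finite case check. With all relations verified, \eqref{MM-a} defines an action of $U_v'(\asl_\ell)$ on $\Fock_v$.
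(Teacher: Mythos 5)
First, a point of comparison: the paper does not prove Theorem \ref{MM_Fock_th} at all --- it is quoted as background with a citation to Ariki's book (Theorem 10.6 there), where the module structure is established by essentially the direct verification you propose. So your approach is the standard one rather than a departure from the paper; the question is only whether your outline is sound and complete. Your choice of $K_{\bar i}|\lambda\rangle = v^{|A_{\bar i}(\lambda)|-|R_{\bar i}(\lambda)|}|\lambda\rangle$ is the correct one, the bookkeeping for the $K$-relations is right (only the positions diagonally adjacent to the new box, which carry colors $\bar c(b)\pm 1$, plus the box itself can change addable/removable status), and the telescoping argument for the same-box terms of $[E_{\bar i},F_{\bar i}]$ is the correct mechanism: ordering the addable and removable $\bar i$-boxes left to right with signs $\pm 1$ turns the difference of the two same-box sums into $\bigl(v^{M}-v^{-M}\bigr)/(v-v^{-1})$ with $M=|A_{\bar i}(\lambda)|-|R_{\bar i}(\lambda)|$.

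There is, however, one concretely wrong justification. For $[E_{\bar i},F_{\bar j}]=0$ with $\bar i\neq\bar j$ (and likewise for the different-box terms when $\bar i=\bar j$), you claim the exponents match "because the integers $N_{\bar j}^l$ and $N_{\bar i}^r$ appearing in each term are unaffected by the operation at the other color." That is true only for non-adjacent colors. When $\bar j=\bar i\pm 1$, adding the $\bar j$-box $b$ genuinely changes the set of addable/removable $\bar i$-boxes (this is exactly why $K_{\bar i}F_{\bar j}K_{\bar i}^{-1}=vF_{\bar j}$ is not the identity relation), so $N_{\bar i}^r(b';\lambda\cup b)\neq N_{\bar i}^r(b';\lambda)$ in general. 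The cancellation still holds, but the correct argument is that the shift in $N_{\bar i}^r(b';\cdot)$ caused by adjoining $b$ equals the shift in $N_{\bar j}^l(b;\cdot)$ caused by deleting $b'$, which requires a (short but genuine) case analysis on the relative positions of $b$ and $b'$. Similarly, the quadratic Serre relations are more than "a small finite case check": the final shape $\mu/\lambda$ consists of two $\bar i$-boxes and one $\bar j$-box, but one of the $\bar i$-boxes may become addable only after the $\bar j$-box is placed, so the admissible orderings differ between the three terms $F_{\bar i}^2F_{\bar j}$, $F_{\bar i}F_{\bar j}F_{\bar i}$, $F_{\bar j}F_{\bar i}^2$, and the identity must be verified configuration by configuration (including the degenerate $\ell=2$ situation, where the Serre relations are cubic rather than quadratic in the paper's stated presentation). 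None of this invalidates your strategy --- it is the proof in the literature --- but as written the adjacent-color step is justified by a false statement, and the two heaviest verifications are asserted rather than carried out.
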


As a $U'_v(\asl_\ell)$-module, $\Fock_v$ is isomorphic to an infinite direct sum of copies of the basic representation $V(\Lambda_0)$. Using the grading of $\Fock_v$ where $|\lambda \rangle$ has degree $|\lambda|$, the highest weight vectors in $\Fock_v$ occur in degrees divisible by $\ell$, and the number of highest weight vectors in degree $\ell k$ is the number of partitions of $k$. Then $\Fock_v\cong V(\Lambda_0) \otimes \bc[x_1, x_2, \ldots]$, 
where $x_k$ has degree $\ell k$, and $U'_v(\asl_\ell)$ acts trivially on the second factor (see \cite[Prop.\ 2.3]{KMS:1995}). Note that we are working with the `derived' quantum group $U_v'(\asl_\ell)$, not the `full' quantum group $U_v(\asl_\ell)$, which is why there are no $\delta$-shifts in the summands of $\Fock_v$.

\begin{Comment} Comparing with \cite[Chapter 10]{ariki:2000}, 
our $N_{\bar i}^l(\mu/\lambda)$ is equal to Ariki's $-N_{\bar i}^a(\mu /\lambda)$ and 
our $N_{\bar i}^r(\mu/\lambda)$ is equal to Ariki's $-N_{\bar i}^b(\mu /\lambda)$. However, these numbers play a slightly different role in Ariki's work, which is explained by a different choice of conventions. 
\end{Comment}

\section{Universal Verma modules} \label{UV-section}

The purpose of this section is to construct a family of representations which are 
universal Verma modules in the sense that each can be ``evaluated" to obtain any given Verma module. This notion was defined by Kashiwara \cite{Kashiwara:1985} in the classical case, and was studied in the quantum case by Kamita \cite{Kamita}. 

\subsection{Rational universal Verma modules} \label{rat-UV}  \label{twisted-UV}

Let $\KK:= \bq(q,z_1,z_2, \ldots, z_N)$.  This field is isomorphic to the field of fractions of $U_q(\fgl_N)^0$ via the map 
\begin{equation}\label{psidefn}
\psi: U_q(\fgl_N)^0 \rightarrow \KK
\qquad\hbox{defined by}\qquad \psi(L_i^{\pm 1}) = z_i^{\pm 1}.
\end{equation}
For each $\mu \in \mathfrak{h}_\bz^*$, define a $\bq(q)$-linear 
automorphism $\sigma_\mu\colon \KK \to \KK$ by
\begin{equation}\label{sigmadefn}
\begin{aligned}
\sigma_\mu(z_i):= q^{(\mu, \varepsilon_i)} z_i,
\qquad\hbox{for $1\le i\le N$,}
\end{aligned}
\end{equation}
where $(\cdot, \cdot)$ is the inner product on $\mathfrak{h}_\bz^*$ defined by $(\varepsilon_i, \varepsilon_j)= \delta_{i,j}$.
Let $ \KK_\mu = \text{span}_\KK \{ v_{\mu+} \}$ be the one dimensional vector space over $\KK$ with
basis vector $v_\mu^+$ and 
$U_q(\fgl_N)^{\geq 0}$ action given by 
\begin{equation}
X_i \cdot v_{\mu +}=0,\quad\hbox{for $1 \leq i \leq N-1$, \quad and}
\qquad
a \cdot v_{\mu+}=\sigma_\mu(\psi (a)) v_{\mu+},\quad\hbox{for $a \in U_q(\fgl_N)^{0}$.}
\end{equation}

The {\it $\mu$-shifted rational universal Verma module} ${}^\mu \widetilde M$ is the $U_q(\fgl_N)$-module 
\begin{equation} \label{ratUV-def}
{}^\mu \widetilde M:= U_q(\fgl_N) \otimes_{U_q(\fgl_N)^{\geq 0}} \KK_\mu. 
\end{equation}
The universal Verma module ${}^\mu \widetilde M$ is actually a module over $U_q(\fgl_N) \otimes_{U_q(\fgl_N)^0} \widetilde U_q(\fgl_N)^0 $, where $ \widetilde U_q(\fgl_N)^0$ is the field of fractions of $U_q(\fgl_N)^0$. However, if we identify $ \widetilde U_q(\fgl_N)^0$ with 
$\KK$ using the map $\psi$, the action of $\widetilde U_q(\fgl_N)^0 $ on ${}^\mu \widetilde M$ 
is not by multiplication, but rather is twisted by the automorphism $\sigma_\mu$. It is to keep track of 
the difference between the action of $ U_q(\fgl_N)^0$ and multiplication that we use different 
notation for the generators of $\KK$ and $U_q(\fgl_N)^0$ (that is, $z_i$ versus $L_i$).

\subsection{Integral universal Verma modules} \label{int-UV}

The field $\KK$ contains an $\cA$-subalgebra 
\begin{equation}
\cR \quad\hbox{generated by}\qquad
z_i^{\pm 1}\quad\hbox{and}\quad
\left[ 
\begin{array}{c}
z_i; c \\
k
\end{array}
\right]
\qquad(1 \leq  i \leq N, c \in \bz, k \in \bz_{>0}),
\end{equation}
which is isomorphic to $U_q^\cA(\fgl_N)^0$ via the restriction of the map $\psi$ in
\eqref{psidefn}.
The {\it integral universal Verma module} ${}^\mu \widetilde M^\cR$ is the 
$U_q^\cA(\fgl_N)$-submodule 
of ${}^\mu \widetilde M$ generated by $v_{\mu+}$. 
By restricting \eqref{ratUV-def}, 
\begin{equation}
{}^\mu \widetilde M^\cR= U_q^\cA(\fgl_N) \otimes_{U_q^\cA(\fgl_N)^{\geq 0}} \cR_\mu,
\end{equation}
where $\cR_\mu$ is the $\cR$-submodule of $ \KK_\mu$ spanned by $v_{\mu+}$. In particular, ${}^\mu \widetilde M^\cR$ is a free $\cR$-module. 

\subsection{Evaluation} \label{eval}
Let $\se_\lambda^\cR: \cR \rightarrow \cA$ be the map defined by 
\begin{equation} \label{ARE}
\se^\cR_\lambda(z_i) =q^{(\lambda, \varepsilon_i)}  \qquad\hbox{and}\qquad
\se^\cR_\lambda
\left[ 
\begin{array}{c}
z_i; c \\
n
\end{array}
\right]  =
\left[ 
\begin{array}{c}
q^{(\lambda, \varepsilon_i)}; c \\
n
\end{array}
\right],
\end{equation}
where $(\cdot, \cdot)$ is the inner product on $\mathfrak{h}^*$ defined by $(\varepsilon_i, \varepsilon_j)=\delta_{i,j}$. 

There is a surjective $U_q^\cA(\fgl_N)$-module homomorphism
``evaluation at $\lambda$''
\begin{equation}
\ev_\lambda: {}^\mu \widetilde M^\cR \rightarrow M^\cA(\mu+\lambda)
\quad\hbox{defined by}\quad
\ev_\lambda(a \cdot v_{\mu+}):= a \cdot v_{\mu+\lambda}, \quad \text{for all $a \in U^\cA_q(\fgl_N)$.}
\end{equation} 

For fixed $\lambda$, the maps  $\ev_\lambda^\cR$ and $\ev_\lambda$ extend to a map from the subspace of $\KK$ and ${}^\mu \widetilde M ={}^\mu \widetilde M^\cR \otimes_\cR \KK$  respectively where no denominators evaluate to 0. Where it is clear we denote both these extended maps by $\ev_\lambda$.
 
 \begin{Example} Computing the action of $L_i$ on $v_{\mu+}$ and $v_{\mu+\lambda}$, 
\begin{equation}
L_i \cdot  v_{\mu+}= q^{(\mu, \varepsilon_i)} z_i v_{\mu+},
\qquad\hbox{and}\qquad
\begin{array}{rl}
L_i \cdot  v_{\mu+\lambda}
&= \ev_\lambda (q^{(\mu, \varepsilon_i)} z_i ) v_{\mu+\lambda} \\
&= q^{(\mu, \varepsilon_i)}q^{(\lambda, \varepsilon_i)} v_{\mu+\lambda} = q^{(\mu+\lambda, \varepsilon_i)} v_{\mu+\lambda}.
\end{array}
\end{equation}
\end{Example}

\subsection{Weight decompositions} \label{dec}

Let $\widetilde V$ be a $U_q(\fgl_N) \otimes_\cA \cR$-module. For each $\nu \in \mathfrak{h}_\bz^*$, we define the \emph{$\nu$-weight space} of $\widetilde{V}$ to be
\begin{equation}
\widetilde{V}_\nu :=  \{ v \in  \widetilde V : L_i \cdot v = q^{(\nu, \varepsilon_i)} z_i v \}.
\end{equation}
The universal Verma module ${}^\mu \widetilde M^\cR$ is a  $U_q(\fgl_N) \otimes_\cA \cR$-module, where the second factor acts as multiplication.  The weight space ${}^\mu \widetilde M_\eta \neq 0$ if and only if $\eta=\mu-\nu$ with $\nu$ in the positive part $Q^+$ of the root lattice. These non-zero weight spaces and the weight decomposition of ${}^\mu \widetilde M$ can be described explicitly by
\begin{equation}
{}^\mu \widetilde M_{\mu-\nu}^\cR = U^\cA_q(\fgl_N)^{< 0}_{-\nu} \cdot \cR_\mu
\qquad\hbox{and}\qquad
{}^\mu \widetilde M^\cR = \bigoplus_{\nu\in Q^+} {}^{\mu} \widetilde M^\cR_{\mu-\nu}.
\end{equation}
Here $U_q^\cA(\fgl_N)^{< 0}_{-\nu}$ is defined using the grading of $U_q(\fgl_N)^{<0}$ with
$F_i\in U_q(\fgl_N)^{<0}_{-(\varepsilon_i-\varepsilon_{i+1})}$. 

\subsection{Tensor products} \label{universal-tensor} Let $\widetilde V$ be a $U_q^\cA(\fgl_N) \otimes_\cA \cR$-module and $W$ a $U_q^\cA(\fgl_N)$-module. The tensor product $\widetilde V \otimes_\cA W$ is a $U_q^\cA(\fgl_N) \otimes_\cA \cR$-module, where the first factor acts via the usual coproduct and the second factor  acts by multiplication on $\widetilde V$. 
In the case when $\widetilde V$ and $W$ both have weight space decompositions, the weight spaces of $\widetilde V \otimes_\cA W$ are
\begin{equation}
(\widetilde V \otimes_\cA W)_\nu = \bigoplus_{\gamma+\eta=\nu} \widetilde V_\gamma \otimes_\cA W_\eta.
\end{equation}

We also need the following:


\begin{Proposition} \label{rat_dec}
The tensor product of a universal Verma module with a Weyl module satisfies
 \begin{equation} 
\left({}^\mu \widetilde M^\cR \otimes_\cA \Delta^\cA(\nu) \right)\otimes_\cR \KK
\cong \left( \bigoplus_{\gamma} ({}^{\mu + \gamma} \widetilde M^\cR)^{\oplus \dim \Delta^\cA(\nu)_\gamma} \right) \otimes_\cR \KK.
\end{equation}
\end{Proposition}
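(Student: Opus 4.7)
The plan is to build an exhaustive filtration of $U_q^\cA(\fgl_N)\otimes_\cA \cR$-submodules
$$0 = N_0 \subset N_1 \subset \cdots \subset N_k = {}^\mu \widetilde M^\cR \otimes_\cA \Delta^\cA(\nu),$$
whose successive quotients $N_i/N_{i-1}$ are isomorphic to the shifted universal Verma modules ${}^{\mu+\gamma_i}\widetilde M^\cR$, with $\gamma_1, \ldots, \gamma_k$ enumerating the weights of $\Delta^\cA(\nu)$ counted with multiplicity, and then to show that after base change to $\KK$ this filtration splits into the claimed direct sum.

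For the filtration, I would start with the Hopf-algebraic tensor identity
$${}^\mu \widetilde M^\cR \otimes_\cA \Delta^\cA(\nu) \;\cong\; U_q^\cA(\fgl_N)\otimes_{U_q^\cA(\fgl_N)^{\geq 0}} \bigl(\cR_\mu \otimes_\cA \Delta^\cA(\nu)\bigr),$$
which holds since $U_q^\cA(\fgl_N)^{\geq 0}$ is a sub-Hopf-algebra and $U_q^\cA(\fgl_N)$ is free over it by the triangular decomposition \eqref{int-triangle}. Choose a weight basis $w_1, \ldots, w_k$ of $\Delta^\cA(\nu)$ ordered by decreasing weight in any refinement of the dominance partial order, so that $w_1 = v_\nu$. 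Because $X_j$ strictly raises weight, each $F_i := \cA\text{-span}(w_1, \ldots, w_i)$ is a $U_q^\cA(\fgl_N)^{\geq 0}$-submodule whose one-dimensional quotient $F_i/F_{i-1}$ carries weight $\gamma_i$; hence $\cR_\mu \otimes_\cA F_i$ is $U_q^\cA(\fgl_N)^{\geq 0}$-stable with quotient isomorphic to $\cR_{\mu+\gamma_i}$. Taking $N_i$ to be the induction of $\cR_\mu \otimes_\cA F_i$ and invoking exactness of induction along this free extension then yields the required filtration with $N_i/N_{i-1} \cong {}^{\mu+\gamma_i}\widetilde M^\cR$.

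To split the filtration after $-\otimes_\cR \KK$, I would induct on $i$, assuming $N_{i-1}\otimes_\cR \KK \cong \bigoplus_{j<i}\bigl({}^{\mu+\gamma_j}\widetilde M^\cR \otimes_\cR \KK\bigr)$. Any preimage $\tilde v_i \in (N_i\otimes_\cR \KK)_{\mu+\gamma_i}$ of the highest weight generator of the top subquotient has each $X_j\tilde v_i$ lying in $(N_{i-1}\otimes_\cR \KK)_{\mu+\gamma_i+\alpha_j}$. By the inductive hypothesis this space is a direct sum of weight spaces of simple universal Vermas, simplicity being a consequence of the non-vanishing, over $\KK$, of the Shapovalov determinant of each ${}^{\mu+\gamma_j}\widetilde M^\cR \otimes_\cR \KK$ that will be established in the next section. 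The non-degeneracy of the Shapovalov form on these simple factors lets one solve a triangular system of linear equations over $\KK$ correcting $\tilde v_i$ by an element of $N_{i-1}\otimes_\cR \KK$ so that $X_j\tilde v_i = 0$ for all $j$. The universal property of ${}^{\mu+\gamma_i}\widetilde M^\cR$ then produces an embedding ${}^{\mu+\gamma_i}\widetilde M^\cR\otimes_\cR \KK \hookrightarrow N_i\otimes_\cR \KK$ that splits the extension.

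The main obstacle will be this splitting step. Construction of the filtration is essentially formal Hopf algebra; what makes the splitting go through is the non-vanishing in $\KK$ of the Shapovalov determinant of each generic universal Verma ${}^{\mu+\gamma}\widetilde M^\cR \otimes_\cR \KK$. This is precisely why the proposition is phrased only after base change to $\KK$: evaluation of the Shapovalov determinant at a particular integral weight $\lambda$ can vanish, and in that case the analogous splitting over $\cR$ (or after $\ev_\lambda$) can genuinely fail.
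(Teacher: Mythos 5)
Your proposal is correct in outline but takes a genuinely different route from the paper's. The paper does not construct the universal Verma filtration directly: it quotes Jantzen for the existence of a Verma filtration of $M(\lambda+\mu)\otimes\Delta(\nu)$ at each specialization, notes that when $\lambda+\mu+\gamma$ is dominant for every weight $\gamma$ of $\Delta(\nu)$ the filtration splits by comparing central characters, and then deduces the universal statement because this happens for a Zariski dense set of evaluations $\ev_\lambda$. You instead build the filtration once and for all over $\cR$ via the Hopf tensor identity and a weight filtration of $\Delta^\cA(\nu)$ --- this part is sound, since $U_q^\cA(\fgl_N)^{\geq 0}$ is a Hopf subalgebra and $U_q^\cA(\fgl_N)$ is free over it, so induction is exact --- and you split over $\KK$ using generic simplicity of universal Verma modules rather than central characters. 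Your version buys an honest integral filtration and avoids the somewhat informal density step; the paper's is shorter because it outsources the filtration. Invoking the non-vanishing of the Shapovalov determinant from Section 5 is a forward reference but not a circularity, since Theorem \ref{fulldet} is imported from the literature and does not depend on this proposition.

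The step you must tighten is the splitting. ``Non-degeneracy of the Shapovalov form on the simple factors lets one solve a triangular system'' is not yet an argument: the map $u\mapsto (X_l u)_l$ from $(N_{i-1}\otimes_\cR\KK)_{\mu+\gamma_i}$ to $\bigoplus_l(N_{i-1}\otimes_\cR\KK)_{\mu+\gamma_i+\alpha_l}$ is not surjective in general, so you need a reason why the particular defect $(X_l\tilde v_i)_l$ lies in its image. The clean statement is that $\mathrm{Ext}^1$ from a Verma module to a \emph{simple} Verma module vanishes: over $\KK$ each summand $M_j$ of $N_{i-1}\otimes_\cR\KK$ is simple, hence isomorphic to its contravariant dual, whose restriction to the positive part is co-free, so the first $U^{>0}$-cohomology of $M_j$ vanishes and the cocycle $(X_l\tilde v_i)_l$ is a coboundary. (Alternatively one can pass to the orthogonal complement of $N_{i-1}\otimes_\cR\KK$ with respect to the contravariant form of Section \ref{SonT}, which is a submodule by contravariance, but then one must first verify that the form restricted to $N_{i-1}\otimes_\cR\KK$ is non-degenerate.) With that supplied, your argument is complete.
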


\begin{proof}
Fix $\nu \in P^+$.  In general, $M(\lambda+\mu) \otimes \Delta(\nu)$ has a Verma filtration
(see, for example, \cite[Theorem 2.2]{Jantzen:1979}) and
if $\lambda+\mu+\gamma$ is dominant for all $\gamma$ such that 
$\Delta(\nu)_\gamma\ne 0$ then
\begin{equation} 
M(\lambda+\mu) \otimes \Delta(\nu) 
\cong \bigoplus_{\gamma} M(\lambda+\mu+\gamma)^{\oplus \dim \Delta(\nu)_\gamma},
\end{equation}
which can be seen by, for instance, taking central characters.
The proposition follows since this is true for a Zariski dense set of weights $\lambda$.
\end{proof}

\section{The Shapovalov form and the Shapovalov determinant} \label{Sform}

\subsection{The Shapovalov form} \label{S_over_A}

The \emph{Cartan involution} $\omega: U_q(\fgl_N) \rightarrow U_q(\fgl_N)$ is the $\bq(q)$-algebra anti-involution of $U_q(\fgl_N)$ defined by 
\begin{equation}
\omega(L_i^{\pm 1})=L_i^{\pm 1}, 
\qquad
\omega(X_i)= Y_iL_iL_{i+1}^{-1}, 
\qquad
\omega(Y_i) = L_i^{-1}L_{i+1} X_i. 
\end{equation}
The map $\omega$ is also a co-algebra involution. 
An \emph{$\omega$-contravariant} form on a $U_q(\fgl_N)$-module $V$ 
is a symmetric bilinear form $(\cdot, \cdot)$ such that
\begin{equation} \label{contrav}
(u,a \cdot v)=(\omega(a) \cdot u, v),\qquad
\hbox{for $u, v \in V$ and $a \in U_q(\fgl_N)$.}
\end{equation}

It follows by the same argument used in the classical case \cite{Shapovalov:1972} that there is an 
$\omega$-contravariant form (the Shapovalov form) on each Verma module $M(\lambda)$ 
and this is unique up to rescaling. The radical of $(\cdot, \cdot)$ is the maximal proper submodule of $M(\lambda)$, so $\Delta(\lambda)=M(\lambda)/\mathrm{Rad}( \cdot, \cdot)$ for all $\lambda \in P^+$. In particular, $(\cdot, \cdot)$ descends to an $\omega$-contravariant form on $\Delta(\lambda)$. 

Since $\omega$ fixes $U_q^\cA(\fgl_N) \subseteq U_q(\fgl_N)$, there is a well defined 
notion of an $\omega$-contravariant form on a $U_q^\cA(\fgl_N)$ module. In particular, 
the restriction of the Shapovalov form on $\Delta(\lambda)$ to $\Delta^\cA(\lambda)$ 
is $\omega$-contravariant.

\subsection{Universal Shapovalov forms}
There are surjective maps of $\cA$-algebras $p_-:  U_q^\cA(\fgl_N)^{<0} \rightarrow \bq(q)$  and  $p_+:  U_q^\cA(\fgl_N)^{>0} \rightarrow \bq(q)$  defined by $p_-(F_i)=0$ and  $p_+(E_i)=0$, for $1 \leq i \leq N$. Using the triangular decomposition \eqref{int-triangle}, there is an $\cA$-linear surjection 
\begin{equation}
\pi_0 := p_- \otimes \Id \otimes p_+: U_q^\cA(\fgl_N) \cong U_q^\cA(\fgl_N)^{<0} \otimes_\cA U^\cA_q(\fgl_N)^0 \otimes_\cA U^\cA_q(\fgl_N)^{>0} \rightarrow U_q^\cA(\fgl_N)^0.
\end{equation}
The {\it standard universal Shapovalov form} is the $\cR$-bilinear form $(\cdot, \cdot)_{{}^\mu \widetilde M^\cR}: {}^\mu \widetilde M^\cR \otimes {}^\mu \widetilde M^\cR \rightarrow \cR$ defined by 
\begin{equation}
(a_1 \cdot  v_{\mu+},a_2 \cdot v_{\mu+})_{{}^\mu \widetilde M^\cR}
=  \big(\sigma_\mu \circ \psi \circ \pi_0\big) (\omega(a_2) a_1)
\end{equation}
for all  $a_1,a_2 \in  U_q^\cR(\fgl_N)^{<0}$. 
Here $\psi$ and $\sigma_\mu$ are as in \eqref{psidefn} and \eqref{sigmadefn}.
Since 
\begin{equation}
(a_1a_2 \cdot  v_{\mu+},a_3 \cdot v_{\mu+})_{{}^\mu \widetilde M^\cR}
=  \big(\sigma_\mu \circ \psi \circ \pi_0\big) (\omega(a_2)\omega(a_1)a_3)
=(a_2 \cdot  v_{\mu+},\omega(a_1)a_3 \cdot v_{\mu+})_{{}^\mu \widetilde M^\cR}
\end{equation}
for $a_1,a_2,a_3 \in U_q(\fgl_N),$
the form 
$(\cdot, \cdot)_{{}^\mu \widetilde M^\cR}$ is $\omega$-contravariant. As with the usual Shapovalov form, distinct weight spaces are orthogonal, where weight spaces are defined as in Section \ref{dec}.

Evaluation
at $\lambda$ gives an $\cA$-valued $\omega$-contravariant form 
$(\cdot, \cdot)_{M^\cA(\mu+\lambda)}$ on $M^\cA(\mu+\lambda)$ by
 \begin{equation} \label{eve}
(\ev_\lambda(u_1), \ev_\lambda(u_2))_{M^\cA(\mu+\lambda)} 
= \ev_\lambda \left((u_1,u_2)_{{}^\mu \widetilde M^\cR}\right),
\qquad\hbox{for $u_1,u_2 \in {}^\mu \widetilde M^\cR$.}
\end{equation}

The form $(\cdot, \cdot)_{{}^\mu \widetilde M^\cR}$ can be extended by linearity to an $\omega$-contravariant form $(\cdot, \cdot)_{{}^\mu \widetilde M}$ on ${}^\mu \widetilde M$. 

\subsection{The Shapovalov determinant} \label{SD}
Let $\widetilde V$ be a $(U_q^\cA(\fgl_N) \otimes_\cA \cR)$-module with a chosen 
$\omega$-contravariant form.
Let $B_\eta$ be an $\cR$ basis for the $\eta$-weight space $\widetilde V_\eta$ of $\widetilde V$. 
Let $\det \widetilde V_{B_\eta}$ be the determinant of the form 
evaluated on the basis $B_\eta$. Changing the basis $B_\eta$ changes the determinant by a unit
in $\cR$ and we sometimes write $\det\widetilde V_\eta$ to mean the determinant calculated on an unspecified basis ($\det\widetilde V_\eta$ which is only defined up to multiplication by unit in $\cR$). 
The {\it Shapovalov determinant}  is
\begin{equation}
\det \widetilde M^\cR_\eta := \det((b_i,b_j)_{\widetilde M^\cR})_{b_i,b_j\in B_\eta}.
\end{equation}

Define the {\it partition function} $p\colon \fh^*\to \ZZ_{\ge 0}$ by 
\begin{equation}
p(\gamma):= \dim M(0)_\gamma.
\end{equation}
Then
$p(\gamma)= \dim M(\lambda)_{\gamma+\lambda}$
for any $\lambda$, and $\eta \not\in Q^-$ implies that $p(\eta)=0$ and $\det \widetilde M^\cR_\eta=1$.

\begin{Theorem} \label{fulldet} \emph{(see \cite[Proposition 1.9A]{KD:1991}, \cite[Theorem 3.4]{KL:1997}, \cite{Shapovalov:1972})}
For any weight $\eta$, 
\begin{equation} 
\det \widetilde M^\cR_\eta 
= c_\eta \prod_{\tiny \begin{array}{c} 1 \leq i < j \leq N \\ m >0 \end{array}} 
{\Big( }z_iz_j^{-1} - q^{2m+2i-2j} z_i^{-1}z_j {\Big)}^{p(\eta+m\varepsilon_i-m\varepsilon_j)},
\end{equation}
where $c_\eta$ is a unit in $\cR \otimes_\cA \bq(q) = \bq(q)[ z_1^{\pm1}, \ldots, z_N^{\pm 1}]$.
\end{Theorem}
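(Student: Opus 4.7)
My plan is to reduce the universal statement to the known non-universal quantum Shapovalov determinant formula via the evaluation maps of Section~\ref{eval}. By \eqref{eve} and the construction of the universal form, for each $\lambda$ the specialization $\ev_\lambda$ sends the universal form to (a unit multiple of) the ordinary Shapovalov form on $M^\cA(\mu+\lambda)$, so $\ev_\lambda(\det \widetilde M^\cR_\eta)$ agrees, up to a unit in $\cA$, with the ordinary Shapovalov determinant of $M(\mu+\lambda)$ on the corresponding weight space. The classical formula \cite{Shapovalov:1972,KD:1991,KL:1997} says that on a Verma $M(\nu)$, for each positive root $\alpha=\varepsilon_i-\varepsilon_j$ and integer $m>0$, the determinant acquires a factor vanishing precisely on the quantum Kac--Kazhdan hyperplane $q^{2(\nu+\rho,\alpha)-m(\alpha,\alpha)}=1$, with multiplicity given by the Kostant partition function at $\nu-m\alpha$ (relative to the chosen weight space). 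Substituting $\nu=\mu+\lambda$ and using $(\rho,\varepsilon_i-\varepsilon_j)=j-i$ together with $(\sigma_\mu\circ\ev_\lambda)(z_i)=q^{(\mu+\lambda,\varepsilon_i)}$, this Kac--Kazhdan hyperplane translates into the vanishing locus of the Laurent polynomial $z_iz_j^{-1}-q^{2m+2i-2j}z_i^{-1}z_j$ in $\cR$.

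The next step is to promote pointwise vanishing to divisibility in $\cR$. Along a generic point of each hyperplane $z_iz_j^{-1}=\pm q^{m+i-j}$, the evaluated Verma $M(\mu+\lambda)$ contains a singular vector of weight $\mu+\lambda-m\alpha$ generating a Verma submodule whose image in the $\eta$-weight space has dimension $p(\eta+m\varepsilon_i-m\varepsilon_j)$ and lies inside the radical of the evaluated form. Consequently the Gram matrix has corank at least $p(\eta+m\varepsilon_i-m\varepsilon_j)$ generically along the hyperplane, and a standard row-elimination argument then shows that $(z_iz_j^{-1}-q^{2m+2i-2j}z_i^{-1}z_j)^{p(\eta+m\varepsilon_i-m\varepsilon_j)}$ divides $\det \widetilde M^\cR_\eta$. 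The factors indexed by distinct triples $(i<j,m)$ are pairwise coprime in $\cR\otimes_\cA\bq(q)$, so their product divides the universal determinant.

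To close the argument I would match total degrees. A PBW monomial basis of $U_q^\cA(\fgl_N)^{<0}_{-\nu}$ gives an $\cR$-basis of ${}^\mu\widetilde M^\cR_{\mu-\nu}$ of cardinality $p(-\nu)$, and estimating the Laurent degrees of the matrix entries $(F_\beta v_{\mu+},F_\gamma v_{\mu+})_{{}^\mu\widetilde M^\cR}$ by straightening $\omega(F_\gamma)F_\beta$ into the triangular decomposition and tracking how $\pi_0$ extracts the central Cartan part produces an upper bound on $\deg\det\widetilde M^\cR_\eta$ matching the total degree of the claimed product. Together with the divisibility from the previous step, this forces equality up to a unit $c_\eta\in\cR\otimes_\cA\bq(q)$.

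The main obstacle is this final degree match: only finitely many factors are nontrivial since $p(\eta+m\varepsilon_i-m\varepsilon_j)$ vanishes once $m$ is large, but a sharp upper bound on $\deg\det\widetilde M^\cR_\eta$ requires careful bookkeeping of cancellations in the PBW-straightening. The cleanest route, following the Kac--Kazhdan strategy, is to reduce to copies of the $U_q(\fsl_2)$-Shapovalov determinant along each root, use the elementary $\fsl_2$ computation as a base case, and then assemble the general formula by combining root-by-root contributions.
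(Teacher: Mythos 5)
The paper does not actually prove Theorem \ref{fulldet}: by construction the universal form at $\mu=0$ is precisely the $U_q^\cA(\fgl_N)^0$-valued Shapovalov form read through the isomorphism $\psi$ of \eqref{psidefn}, so the theorem is a verbatim transport of the quantum Shapovalov determinant of \cite{KD:1991,KL:1997,Shapovalov:1972}, and the paper simply cites it. Your proposal instead sketches an independent proof---the standard Shapovalov/Kac--Kazhdan argument of divisibility from singular vectors plus a leading-term computation. That is a legitimate and more self-contained route, but as written it has two genuine gaps.

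First, the existence of a singular vector of weight $\mu+\lambda-m(\varepsilon_i-\varepsilon_j)$ generically along each hyperplane is asserted, not proved; this is the substantive half of the lower bound and requires constructing Shapovalov elements, e.g.\ by the $\fsl_2$-reduction you mention only in the final sentence. Moreover, up to a unit of $\bq(q)[z_1^{\pm1},\ldots,z_N^{\pm1}]$ the factor $z_iz_j^{-1}-q^{2m+2i-2j}z_i^{-1}z_j$ splits as $\bigl(z_i-q^{m+i-j}z_j\bigr)\bigl(z_i+q^{m+i-j}z_j\bigr)$, and the specializations $z_i\mapsto q^{(\lambda,\varepsilon_i)}$ at integral weights only ever meet the first component; so your framing through evaluated Verma modules $M^\cA(\mu+\lambda)$ cannot establish divisibility by the second linear factor. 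The corank argument must be run universally, over the residue field of $\cR\otimes_\cA\bq(q)$ at each of the two height-one primes (or one must invoke a sign-twist automorphism of $U_q(\fgl_N)$ to move one component to the other), not by evaluation at integral weights. Second, the degree match is not mere bookkeeping: divisibility together with an upper bound on degrees does not force the cofactor $c_\eta$ to be a unit unless the leading term of $\det\widetilde M^\cR_\eta$ (in a suitable grading on the $z_i$) is computed exactly; in the standard proof this is done by showing the top term of the Gram determinant in a PBW basis comes from the diagonal and coincides with the product of the top terms of the claimed factors, and it is precisely this computation that pins down the multiplicities $p(\eta+m\varepsilon_i-m\varepsilon_j)$. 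You correctly flag this step as the main obstacle, but deferring it means the proposal stops short of a proof; filling in these two points essentially reproduces the arguments of the cited references.
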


\begin{Proposition} \label{mu-s}
Fix $\mu, \eta \in \fh_\ZZ^*$ with $
\eta-\mu \in Q^-.$ Choose an $\cA$-basis $B_{\eta-\mu}$ for $U_q^\cA(\fgl_N)_{\eta-\mu}.$ Consider the $\cR$-bases 
$\widetilde B_{\eta-\mu} := \{ b \cdot v_+ \ |\ b \in B_{\eta-\mu} \}$ for $\widetilde M^{\cR}_{\eta-\mu}$ 
and 
${}^\mu \widetilde B_\eta : = \{ b \cdot {v_{\mu+}} \ |\  b \in B_{\eta-\mu} \}$ 
for $ {}^\mu \widetilde M^\cR_\eta $. Then 
$\det {}^\mu \widetilde M^\cR_{({}^\mu \widetilde B_\eta)}
= \sigma_\mu \big(\det \widetilde M^\cR_{\widetilde B_{\eta-\mu}}\big)$.
\end{Proposition}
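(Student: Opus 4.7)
The proof is essentially a matter of unwinding the definitions: the universal Shapovalov form was built precisely so that the $\mu$-shift in ${}^\mu \widetilde M^\cR$ is absorbed by the automorphism $\sigma_\mu$. The plan is to show that the Gram matrix computing $\det {}^\mu \widetilde M^\cR_{({}^\mu \widetilde B_\eta)}$ is obtained entrywise from the Gram matrix computing $\det \widetilde M^\cR_{\widetilde B_{\eta-\mu}}$ by applying $\sigma_\mu$, and then to invoke the fact that determinants commute with ring homomorphisms.

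Fix $b_i, b_j \in B_{\eta-\mu} \subset U_q^\cA(\fgl_N)^{<0}$. By the definition of the universal Shapovalov form,
\begin{equation*}
(b_i \cdot v_{\mu+}, b_j \cdot v_{\mu+})_{{}^\mu \widetilde M^\cR} = \big(\sigma_\mu \circ \psi \circ \pi_0\big)\big(\omega(b_j)\, b_i\big),
\end{equation*}
while in the unshifted case $\sigma_0 = \id$ gives
\begin{equation*}
(b_i \cdot v_+, b_j \cdot v_+)_{\widetilde M^\cR} = (\psi \circ \pi_0)\big(\omega(b_j)\, b_i\big) \in \cR.
\end{equation*}
Comparing these two identities immediately yields
\begin{equation*}
(b_i \cdot v_{\mu+}, b_j \cdot v_{\mu+})_{{}^\mu \widetilde M^\cR} = \sigma_\mu\big((b_i \cdot v_+, b_j \cdot v_+)_{\widetilde M^\cR}\big),
\end{equation*}
so the Gram matrix of ${}^\mu \widetilde B_\eta$ is obtained from the Gram matrix of $\widetilde B_{\eta-\mu}$ by applying $\sigma_\mu$ to each entry.

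The main point requiring care is that $\sigma_\mu$ actually restricts to a ring automorphism of $\cR$, since otherwise the identity claimed by the proposition would not even be an equation in $\cR$. On the generators $z_i^{\pm 1}$ this is immediate from $(\mu,\varepsilon_i) \in \ZZ$. On the divided-power generators a direct computation from the definition of $\sigma_\mu$ gives
\begin{equation*}
\sigma_\mu\!\left(\left[\begin{array}{c} z_i; c \\ k \end{array}\right]\right) = \left[\begin{array}{c} z_i; c + (\mu,\varepsilon_i) \\ k \end{array}\right] \in \cR,
\end{equation*}
and $\sigma_{-\mu}$ serves as a two-sided inverse.

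Since the determinant of a square matrix is a polynomial in its entries with integer coefficients, it commutes with any ring homomorphism. Applying this to $\sigma_\mu$ and to the entrywise relation above yields the desired identity $\det {}^\mu \widetilde M^\cR_{({}^\mu \widetilde B_\eta)} = \sigma_\mu\big(\det \widetilde M^\cR_{\widetilde B_{\eta-\mu}}\big)$.
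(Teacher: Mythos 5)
Your proof is correct and follows essentially the same route as the paper: compute each Gram matrix entry via the definition of the universal Shapovalov form, observe that the shifted entries are obtained by applying $\sigma_\mu$ to the unshifted ones, and conclude by taking determinants. The extra verification that $\sigma_\mu$ restricts to an automorphism of $\cR$ (including its action on the elements $\left[\begin{array}{c} z_i; c \\ k \end{array}\right]$) is a reasonable addition the paper leaves implicit.
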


\begin{proof}
For $b,b' \in B_{\eta-\mu}$,
\begin{align}
(b \cdot v_{\mu+},b' \cdot v_{\mu+})_{{}^\mu \widetilde M^\cR} 
= \sigma_\mu \circ \psi  \circ \pi_0 (\omega(b') b) 
= \sigma_\mu \big( (b \cdot v_{0+},b' \cdot  v_{0+})_{\widetilde M^\cR}\big).
\end{align}
The result follows by taking determinants.
\end{proof}

\subsection{Contravariant forms on tensor products} \label{SonT}

If $V$ and $W$ are  $U_q^\cA(\fgl_N)$-modules with $\omega$-contravariant forms 
$(\cdot, \cdot)_V$ and $(\cdot, \cdot)_W$,
define an $\cA$-bilinear form $(\cdot,\cdot)_{W \otimes V}$ by 
 $(w_1\otimes v_1, w_2\otimes v_2)_{W \otimes V}
 =  (w_1,w_2)_W(v_1,v_2)_{V}$.
Similarly, for 
a $U_q^\cA(\fgl_N) \otimes_\cA \cR$ module $\widetilde W$ with $\cR$-bilinear 
$\omega$-contravariant form $(\cdot, \cdot)_{\widetilde W}$, define
 a $\cR$-bilinear form $(\cdot, \cdot)_{\widetilde W \otimes_{\bq(q)} V}$ 
 on $\widetilde W \otimes_{\bq(q)} V$ by
\begin{equation}\label{prodformdefn}
(u_1 \otimes v_1, u_2 \otimes v_2)_{\widetilde W \otimes_{\bq(q)} V} 
= (u_1,u_2)_{\widetilde W} (v_1,v_2)_V.
\end{equation}
Since $\omega$ is a coalgebra involution (i.e., $\Delta(\omega(a))= (\omega \otimes \omega) \Delta(a)$, for $a \in U_q(\fgl_N)$),
the forms $(\cdot,\cdot)_{V \otimes W}$ and 
$(\cdot, \cdot)_{{}^\mu \widetilde M \otimes_{\bq(q)} V}$
are $\omega$-contravariant. 

In the case when $\widetilde W= {}^\mu \widetilde M^\cR$, evaluation of the 
$\omega$-contravariant form $(\cdot, \cdot)_{{}^\mu \widetilde M^\cR \otimes_\cA V}$
at $\lambda$ gives an $\omega$-contravariant form 
$(\cdot, \cdot)_ {M^\cA(\mu+\lambda) \otimes_\cA V}$:
\begin{equation} \label{ev-tensor}
\begin{aligned}
(u_1 \otimes v_1, u_2 \otimes v_2)_ {M^\cA(\mu+\lambda) \otimes_\cA V} &= \ev_\lambda \left((u_1 \otimes v_1, u_2 \otimes v_2)_ {{}^\mu \widetilde M^\cR \otimes_\cA V} \right) \\& = (\ev_\lambda(u_1) \otimes v_1, \ev_\lambda(u_2) \otimes v_2)_{M(\mu+\lambda) \otimes_\cA V},
\end{aligned}
\end{equation}
for $u_1, u_2 \in {}^\mu \widetilde M$ and $v_1, v_2 \in V$. 
As in Section \ref{eval}, this evaluation can be extended to the $\cA$-submodule of the
rational module where no denominators evaluate to zero.

\section{The Misra-Miwa formula for $F_{\bar i}$ from $U_q^\cA(\fgl_N)$ representation theory}  \label{makeops}

Let us prepare the setting for our main result (Theorem 6.1). Fix $\ell \geq 2$ and a partition $\lambda$. Let $N$ a positive integer greater than the 
number of parts of $\lambda$. All calculations below are in terms of representations 
of $U_q^\cA(\fgl_N)$. 

$\bullet$ Let $V=\Delta^\cA(\varepsilon_1)$ be the standard $N$-dimensional module. Since $\Delta^\cA(\lambda) \otimes_\cA \bq(q)= \Delta(\lambda)$, Equation \eqref{plusboxes} implies
\begin{equation} \label{tens}
 \left( \Delta^{\cA}(\lambda) \otimes_\cA V \right) \otimes_{\cA} \bq(q)  \simeq \bigoplus \Delta^{\cA}(\lambda+\varepsilon_{k_j})  \otimes_{\cA} \bq(q),
\end{equation}
where the sum is over those indices $1 = k_1 < k_2 < \cdots < k_{m_\lambda} \leq N$ for which $\lambda+\varepsilon_{k_j}$ is a partition. 
For ease of notation let  $\mu^{(j)}= \lambda+\varepsilon_{k_j}$.

$\bullet$ Fix an $\cA$-basis $\{ v_1, \ldots, v_N \}$ of $V$ where $v_k$ has weight $\varepsilon_k$ and $Y_i(v_k) = \delta_{i,k} v_{k+1}$. Recursively define singular weight vectors $v_{\mu^{(j)}}$ in $\left(\Delta^\cA(\lambda) \otimes V \right) \otimes_\cA \bq(q)$ by:
\begin{enumerate}
\item $v_{\mu^{(1)}}= v_\lambda \otimes v_1.$

\item For each $k$, the submodule $W_k$ of $(\Delta(\lambda) \otimes_\cA V) \otimes_\cA \bq(q)$ generated by $\{ v_\lambda \otimes v_i\ |\ 1 \leq i \leq k \}$ contains all weight vectors of $(\Delta(\lambda) \otimes_\cA V) \otimes_\cA \bq(q)$ of weight greater than or equal to $\lambda+\varepsilon_k$. Thus, using \eqref{tens}, for each $1 \leq j \leq m_\lambda$ there is a 
one-dimensional space of singular vectors of weight $\mu^{(j)}$ in $W_{k_j}$, 
and this is not contained in $W_{k_{j-1}}$ 
(since $k_j> k_{j-1}$). 
This implies that there unique singular vector $v_{\mu^{(j)}}$ of weight $\mu^{(j)}$ in  
\begin{equation} \label{eqs2}
v_\lambda \otimes v_{k_j} + \bigoplus_{1 \leq i <j} U_q(\fgl_N)  v_{\mu^{(i)}} 
\subseteq \left( \Delta^\cA(\lambda)\otimes_\cA V \right) \otimes_\cA \bq(q),
\end{equation}
where we recall that $U_q(\fgl_N)= U_q^\cA(\fgl_N) \otimes_\cA \bq(q)$.
\end{enumerate}

$\bullet$ There is a unique $\omega$-contravariant form on $\Delta^\cA(\lambda)$ normalized so that $(v_\lambda, v_\lambda)=1$ and a unique $\omega$-contravariant form on $V$ normalized so that $(v_1,v_1)=1$. As in section \ref{SonT}, define a $\omega$-contravariant form on $\left( \Delta^{\cA}(\lambda) \otimes_\cA V \right) \otimes_{\cA} \bq(q)$ by $(u_1 \otimes w_1, u_2 \otimes w_2)= (u_1,u_2)(w_1,w_2)$. For each $ 1\leq j \leq m_\lambda$, define an element $r_{j}(\lambda) \in \bq(q)$ by
\begin{equation}
r_{j}(\lambda) := (v_{\mu^{(j)}}, v_{\mu^{(j)}}).
\end{equation}

\begin{Theorem} \label{main} The Misra-Miwa operators $F_{\bar i}$ from Section \ref{MM_section} satisfy
\begin{equation} \label{Fdeftwo}
\displaystyle F_{\bar i} |\lambda\rangle = \sum_{\bar c(b^{(j)})= \bar i}v^{\val_{\phi_{2\ell}} r_{j}(\lambda)} |\mu^{(j)}\rangle,
\end{equation}
where $b^{(j)}$ is the box $\mu^{(j)} / \lambda$, $\bar c (b^{(j)})$ is the color of box $b^{(j)}$ as in Figure \ref{partition_bij}, $\phi_{2\ell}$ is the $2\ell^{th}$ cyclotomic polynomial in $q$ and $\val_{\phi_{2\ell}} r$ is the number of factors of $\phi_{2\ell}$ in the numerator of $r$ minus the number of factors of $\phi_{2\ell}$ in the denominator of $r$.
\end{Theorem}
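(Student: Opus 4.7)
The strategy is to lift everything to universal Verma modules, where Theorem~\ref{fulldet} gives a closed form for all inner products, and then evaluate at $\lambda$ and track $\phi_{2\ell}$-valuations.

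\emph{Universal lift and isolation.} Replicate the recursive singular vector construction inside $({}^0 \widetilde M^\cR \otimes_\cA V) \otimes_\cR \KK$, producing universal singular vectors $\tilde v_k$ of weight $\varepsilon_k$ (for each $k=1,\ldots,N$) via the analogous triangularity condition with $v_{0+}$ in place of $v_\lambda$. The evaluation map $\ev_\lambda$, extended to the locus where denominators do not vanish, carries $\tilde v_{k_j}$ to $v_{\mu^{(j)}}$, so by \eqref{ev-tensor} the universal self-pairing $\tilde c_k := (\tilde v_k,\tilde v_k) \in \KK$ satisfies $r_j(\lambda) = \ev_\lambda(\tilde c_{k_j})$. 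By Proposition~\ref{rat_dec},
\[
({}^0 \widetilde M^\cR \otimes V) \otimes_\cR \KK \;\cong\; \bigoplus_{k=1}^N \bigl({}^{\varepsilon_k} \widetilde M^\cR \otimes_\cR \KK\bigr),
\]
a decomposition of universal Verma modules which are simple over $\KK$ and pairwise non-isomorphic. The $\omega$-contravariant form therefore decomposes orthogonally, and since $\tilde v_{k_j}$ generates the $k_j$-th summand, the triangularity expression $\tilde v_{k_j} = v_{0+}\otimes v_{k_j} + (\text{terms in other summands})$ yields the contraction formula $\tilde c_{k_j} = (v_{0+}\otimes v_{k_j},\, \tilde v_{k_j})$.

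\emph{Computing $\tilde c_{k_j}$.} One computes $\tilde v_{k_j}$ inductively on $k_j$ by successively imposing the singular conditions $X_\beta \cdot \tilde v_{k_j} = 0$ for each simple root. Each step inverts a $U_q^0$-scalar which, via Proposition~\ref{mu-s}, is an explicit Shapovalov-type factor; this produces a recursive rational formula for $\tilde v_{k_j}$ in the $z_i$'s. Substituting into the contraction formula above and simplifying using Proposition~\ref{mu-s} and Theorem~\ref{fulldet} expresses $\tilde c_{k_j}$ as a product of factors of the form $z_i z_{i'}^{-1} - q^{2m+2(i-i')} z_i^{-1} z_{i'}$ (with $i<i'\leq k_j$ and $m\in\ZZ$), each appearing in either the numerator or the denominator.

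\emph{Valuation and combinatorial matching.} Under $\ev_\lambda$ ($z_i\mapsto q^{\lambda_i}$), each such factor becomes a $q$-power times the quantum integer $[(\lambda_i-i)-(\lambda_{i'}-i')-m]$. Since $\phi_{2\ell}\mid [n]$ iff $\ell\mid n$ (for $n\neq 0$), the $\phi_{2\ell}$-valuation of $\tilde c_{k_j}|_\lambda$ counts signed triples $(i,i',m)$ with $(\lambda_i-i)-(\lambda_{i'}-i')-m\equiv 0\pmod{\ell}$. These triples are in bijection with the addable and removable $\bar i$-colored boxes of $\lambda$ lying to the left of the added box $b^{(j)}$ (the constraint $i,i'\leq k_j$ corresponds to ``to the left of $b^{(j)}$''), with the numerator/denominator sign distinguishing addable from removable. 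The principal obstacle, and the heart of the proof, is this final combinatorial identification: matching each Shapovalov factor together with its sign to a specific addable or removable $\bar i$-colored box, thereby yielding $\val_{\phi_{2\ell}} r_j(\lambda) = N^l_{\bar i}(\mu^{(j)}/\lambda)$.
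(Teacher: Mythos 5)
Your overall architecture matches the paper's: lift the singular vectors to the universal Verma module tensor $V$, identify $r_j(\lambda)=\ev_\lambda(s_{k_j})$ via \eqref{ev-tensor}, express the universal self-pairings through the Shapovalov determinant, and finish by counting which $q$-integers are divisible by $\phi_{2\ell}$. The first and last steps are fine. The gap is in the middle: the entire quantitative content of the theorem is the closed product formula for $s_k$ (Lemma \ref{get_a_hold} in the paper), and you assert it rather than derive it. Your proposed route --- solve the singular-vector conditions recursively, ``inverting a $U_q^0$-scalar'' at each step, then substitute into the contraction formula and ``simplify using Proposition \ref{mu-s} and Theorem \ref{fulldet}'' --- does not go through as described. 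The condition $X_i\cdot \tilde v_k=0$ is a linear system on the weight space $(\widetilde M^\cR\otimes V)_{\varepsilon_k}$, whose dimension is $\sum_j p(\varepsilon_k-\varepsilon_j)$, not a sequence of scalar inversions; and neither Proposition \ref{mu-s} nor Theorem \ref{fulldet} applies directly to the self-pairing of a single vector --- they are statements about determinants of the form on entire weight spaces. You need a mechanism that converts weight-space determinant data into the individual numbers $s_k$.

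The paper's mechanism is Lemma \ref{equal-dets}: compute $\det(\cdot,\cdot)$ on all of $(\widetilde M^\cR\otimes V)_\eta$ twice, once in the product basis $A_\eta=\{(b\cdot v_+)\otimes v_k\}$ (where it factors as $\prod_k \det\widetilde M^\cR_{\eta-\varepsilon_k}$ up to $q$-powers) and once in the Verma-filtration basis $D_\eta=\{b\cdot v_{\varepsilon_k+}\}$ (where, by orthogonality of the summands and Proposition \ref{mu-s}, it equals $\prod_k s_k^{p(\eta-\varepsilon_k)}\,\sigma_{\varepsilon_k}(\det\widetilde M^\cR_{\eta-\varepsilon_k})$). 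The two bases are related by a unitriangular change of basis, so the determinants agree, yielding $\prod_k s_k^{p(\eta-\varepsilon_k)}=\prod_k \det\widetilde M^\cR_{\eta-\varepsilon_k}\big/\sigma_{\varepsilon_k}\det\widetilde M^\cR_{\eta-\varepsilon_k}$; setting $\eta=\varepsilon_k$, feeding in Theorem \ref{fulldet}, and inducting on $k$ isolates each $s_k$ and shows only the $m=1$ factors with $i=j<k$ survive. Without this (or an honest execution of the Jantzen-style direct computation you gesture at), your product expression for $\tilde c_{k_j}$, including which factors occur, with what multiplicity, and in numerator versus denominator, is unsupported. A smaller point: you call the final combinatorial matching ``the heart of the proof,'' but once $\ev_\lambda(s_k)$ is written as $\prod_{j<k}[c(g_j)-c(b)]/[c(g_j)-c(b)+1]$ the telescoping to addable/removable boxes and the valuation count (Propositions \ref{the_vals1} and \ref{the_vals2}) are routine; the difficulty sits exactly where your sketch is thinnest.
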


The proof of Theorem \ref{main} will occupy the rest of this section. We will first prove a similar statement, Proposition \ref{the_vals2}, where the role of the Weyl modules is played by the universal Verma modules from Section \ref{UV-section}. For ease of notation, let $\widetilde M^\cR$ denote the module ${}^0 \widetilde M^\cR$ from section \ref{int-UV}.

\begin{Definition} \label{make-the-high-vectors} \label{mhv}
Recursively define singular weight vectors $v_{\varepsilon_k+} \in \left( \widetilde M^\cR \otimes_\cA V \right) \otimes_\cR \KK$ and elements $s_k \in \KK$ for $1 \leq k \leq N$ by
\begin{enumerate}
\item $v_{\varepsilon_1+}= v_+ \otimes v_1.$

\item Since $\{v_+ \otimes v_j\ |\ 1\le j\le N\}$ generates  $\widetilde M^\cR \otimes_\cA V$ as a $U_q^\cA(\fgl_N)^{\leq 0}$ module, Proposition \ref{rat_dec} implies that, for each $1 \leq k \leq N$, there is a unique singular vector $v_{\varepsilon_{k}\tiny +}$ in  
$\displaystyle v_+ \otimes v_k 
+ \bigoplus_{1 \leq j <k} U_q^\KK(\fgl_N)  v_{\varepsilon_j+} 
\subseteq \left( \widetilde M^\cR \otimes_\cA V \right) \otimes_\cR \KK$, where 
$U_q^\KK(\fgl_N):= U_q(\fgl_N) \otimes_{\bq(q)} \KK$ and the factor of $\KK$ acts by multiplication on $\widetilde M^\cR$. 
\end{enumerate}
Let $s_k = (v_{\varepsilon_k\tiny+}, v_{\varepsilon_k \tiny+})$. 
\end{Definition}

The $s_k$ are quantized versions of the Jantzen numbers 
first calculated in \cite[Section 5]{Jantzen:1974} and quantized in \cite{Wiesner}. It follows immediately from the definition that $s_1=1$.

\begin{Lemma} \label{equal-dets}
For any weight $\eta$, up to multiplication by a power of $q$,
\begin{equation} 
\prod_{1 \leq k \leq N} s_k^{p(\eta-\varepsilon_k)}  =
\prod_{1 \leq k \leq N} \frac{\det \widetilde M^\cR_{\eta-\varepsilon_k}}{\sigma_{\varepsilon_k} \det \widetilde M^\cR_{\eta-\varepsilon_k}},
 \end{equation}
where, as in Section \ref{SD}, $\det \widetilde M^\cR_{\eta-\varepsilon_k}$ is the determinant of the Shapovalov form evaluated on an $\cR$-basis for the $\eta-\varepsilon_k$ weight space of $\widetilde M^\cR$. 
\end{Lemma}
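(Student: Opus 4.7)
The strategy is to compute the determinant of the $\omega$-contravariant form on the $\eta$-weight space of $\widetilde M^\cR \otimes_\cA V$ in two different ways, and then compare. The tensor-product form used is the one from Section \ref{SonT}.

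\textbf{First computation (direct basis).} Choose an $\cR$-basis $B_{\eta-\varepsilon_k}$ of $\widetilde M^\cR_{\eta-\varepsilon_k}$ for each $k$, and form the basis
$\bigcup_{k=1}^N \{\, b \otimes v_k \mid b \in B_{\eta-\varepsilon_k}\}$
of $(\widetilde M^\cR\otimes_\cA V)_\eta$. Since $(b\otimes v_k, b'\otimes v_{k'}) = (b,b')(v_k,v_{k'})$ and the vectors $v_k$ lie in distinct weight spaces of $V$, the Gram matrix is block-diagonal with blocks given by the universal Shapovalov form on $\widetilde M^\cR_{\eta-\varepsilon_k}$ scaled by $(v_k,v_k)$. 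A short calculation using $\omega$-contravariance and the standard action on $V$ shows that $(v_k,v_k)$ is a power of $q$. Thus, up to a power of $q$,
\begin{equation*}
\det(\widetilde M^\cR\otimes_\cA V)_\eta \;=\; \prod_{k=1}^N \det \widetilde M^\cR_{\eta-\varepsilon_k}.
\end{equation*}

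\textbf{Second computation (via singular vectors).} By Proposition \ref{rat_dec} and the construction in Definition \ref{mhv}, the vectors $v_{\varepsilon_k+}$ generate submodules of $(\widetilde M^\cR\otimes_\cA V)\otimes_\cR\KK$ isomorphic to the universal Verma modules ${}^{\varepsilon_k}\widetilde M \otimes_\cR \KK$ that together give a direct-sum decomposition. I claim that these summands are pairwise orthogonal under $(\cdot,\cdot)_{\widetilde M^\cR\otimes_\cA V}$. For $k\neq j$ and $a,b\in U_q(\fgl_N)^{<0}$, $\omega$-contravariance gives $(a\cdot v_{\varepsilon_k+},b\cdot v_{\varepsilon_j+}) = (v_{\varepsilon_k+},\omega(a)b\cdot v_{\varepsilon_j+})$; using the triangular decomposition and that $X_i\cdot v_{\varepsilon_j+}=0$, the right factor becomes a $\KK$-combination of weight vectors in ${}^{\varepsilon_j}\widetilde M$, and pairing with $v_{\varepsilon_k+}$ (of weight $\varepsilon_k\neq \varepsilon_j$) is forced to vanish by weights. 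Hence the form is block-diagonal with respect to the decomposition, and by the uniqueness of $\omega$-contravariant forms on a universal Verma module, its restriction to ${}^{\varepsilon_k}\widetilde M\otimes_\cR\KK$ equals $s_k$ times the universal Shapovalov form on ${}^{\varepsilon_k}\widetilde M^\cR$. Applying Proposition \ref{mu-s} to the $\eta$-weight space of ${}^{\varepsilon_k}\widetilde M^\cR$ (which has $\cR$-rank $p(\eta-\varepsilon_k)$), we obtain, up to units in $\cR$ that can be absorbed into powers of $q$,
\begin{equation*}
\det(\widetilde M^\cR\otimes_\cA V)_\eta \;=\; \prod_{k=1}^N s_k^{p(\eta-\varepsilon_k)}\, \sigma_{\varepsilon_k}\!\big(\det \widetilde M^\cR_{\eta-\varepsilon_k}\big).
\end{equation*}

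\textbf{Comparison.} Equating the two expressions and solving for $\prod_k s_k^{p(\eta-\varepsilon_k)}$ yields the claimed identity, up to a power of $q$. The step I expect to be most delicate is the orthogonality of the universal Verma summands in the second computation, together with the base-change bookkeeping that ensures all extraneous unit factors in $\cR$ are indeed powers of $q$; the proof of orthogonality really uses that the ``universal'' highest weights $\varepsilon_k$ are genuinely distinct even before any evaluation, so that no non-trivial $\omega$-contravariant pairings can develop between different summands.
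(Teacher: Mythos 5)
Your two computations are exactly the ones the paper's proof uses, and both are essentially sound: the product basis $\{(b\cdot v_+)\otimes v_k\}$ gives $\prod_k \det\widetilde M^\cR_{\eta-\varepsilon_k}$ up to a power of $q$, and the singular-vector basis gives $\prod_k s_k^{p(\eta-\varepsilon_k)}\,\sigma_{\varepsilon_k}(\det\widetilde M^\cR_{\eta-\varepsilon_k})$ (your explicit orthogonality argument replaces the block structure the paper uses implicitly; just note that when $k>j$ the weight $\varepsilon_k$ \emph{does} occur in ${}^{\varepsilon_j}\widetilde M$, so "forced to vanish by weights" only works if you move the operators onto the summand with the higher highest weight, or invoke singularity of $v_{\varepsilon_k+}$). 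The genuine gap is the step ``Equating the two expressions.'' The determinant of a bilinear form is basis-dependent: if the two bases are related by a matrix $P$, the Gram determinants differ by $\det(P)^2$, and here $P$ is only defined over the fraction field $\KK$ (the vectors $v_{\varepsilon_k+}$ live in $(\widetilde M^\cR\otimes_\cA V)\otimes_\cR\KK$), so $\det(P)^2$ is a priori an arbitrary element of $\KK^\times$ --- not a unit of $\cR$, let alone a power of $q$. You flag this as ``base-change bookkeeping'' but never carry it out, and your phrasing understates the danger: even genuine units of $\cR\otimes_\cA\bq(q)$ are scalar multiples of monomials in the $z_i$, not powers of $q$, and an uncontrolled square in $\KK^\times$ would destroy the conclusion entirely.

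The missing ingredient, which the paper supplies, is that this change of basis is unitriangular, hence has determinant exactly $1$. This is checked in two steps through the intermediate basis $\{b\cdot(v_+\otimes v_k)\}$: the coproduct formula for the $Y_i$ gives $b\cdot(v_+\otimes v_k)=(b\cdot v_+)\otimes v_k + (\hbox{terms involving } v_{k'} \hbox{ with } k'>k)$, and Definition \ref{mhv}(ii) gives $v_{\varepsilon_k+}=v_+\otimes v_k + (\hbox{terms in } U_q^\KK(\fgl_N)\cdot v_{\varepsilon_j+} \hbox{ with } j<k)$; each step is triangular with $1$'s on the diagonal for a suitable ordering. Adding this observation closes your argument.
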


\begin{Comment}
In order for Lemma \ref{equal-dets} to hold as stated, for each $1 \leq k \leq N$, one must calculate the 
$\det \widetilde M^\cR_{\eta-\varepsilon_k}$ in the numerator and denominator with respect 
to the same $\cR$-basis. 
The power of $q$ which appears depends on this choice of $\cR$-bases. 
\end{Comment}

\begin{proof}[Proof of Lemma \ref{equal-dets}]
For each $\gamma \in \hbox{span}_{\ZZ_{\le 0}}(R^+)$ 
fix an $\cR$-basis $B_\gamma$ for $U_q^\cR(\fgl_N)^{<0}_\gamma$.
Consider the following three $\KK$-bases 
for  $\left( ( \widetilde M^\cR \otimes_\cA V)_\eta \right) \otimes_\cR \KK$:
\begin{equation}
\begin{aligned}
&A_\eta  := \{ (b \cdot  v_+) \otimes v_k \ |\  b \in B_{\eta-\varepsilon_k}, 1 \leq k \leq N \}, \\
&C_\eta := \{ b \cdot ( v_+ \otimes v_k) \ |\  b \in B_{\eta-\varepsilon_k}, 1 \leq k \leq N \},
\\
&D_\eta:= \{ b \cdot  v_{\varepsilon_k\tiny+} \ |\  b \in B_{\eta-\varepsilon_k}, 1 \leq k \leq N \}.
\end{aligned}
\end{equation}
Let $\det (\widetilde M^\cR \otimes_\cA V)_B$ denote the determinant of 
$(\cdot, \cdot)_{(\widetilde M^\cR \otimes_\cA V)_\eta}$ calculated on $B$, where 
$B$ is one of $A_\eta, C_\eta$ or $D_\eta$. Let $\det {}^\nu \widetilde M^\cR_{B_{\eta-\nu}}$ 
denote $\det{}^\nu \widetilde M^\cR_\eta$ calculated with respect to the basis 
$B_{\eta-\nu} \cdot v_{\nu+}$.

By the definition of the $\omega$-contravariant form on $\widetilde M^\cR \otimes_\cA V$ (see Section \ref{universal-tensor}),
\begin{equation} \label{det1}
\det (\widetilde M^\cR \otimes V)_{A_\eta} = \prod_{k=1}^N  (\det \widetilde M^\cR_{B_{\eta-\varepsilon_k}})^{\dim V_{\varepsilon_k}} (\det V_{\varepsilon_k})^{\dim \widetilde M^\cR_{\eta-\varepsilon_k}}.
\end{equation}
For $1 \leq k \leq N$, $V_{\varepsilon_k}$ is one dimensional and $\det V_{\varepsilon_k}$ is a power of $q$. Hence, 
up to multiplication by a power of $q$, \eqref{det1} simplifies to
\begin{equation} \label{det2}
\det (\widetilde M^\cR \otimes_\cA V)_{A_\eta}
= \prod_{k=1}^N \det \widetilde M^\cR_{B_{\eta-\varepsilon_k}}.
\end{equation} 

Notice that $U_q^\cA (\fgl_N)^{<0} \cdot v_{\varepsilon_k+}$ is isomorphic to 
${}^{\varepsilon_k} \widetilde M$, and $D_\eta$ is the union of $\cR$-bases for 
each of these submodules. 
For each $1 \leq k \leq N$, and each $\eta \in \mathfrak{h}_\bz^*,$ define an $\cR$ basis of ${}^{\varepsilon_k} \widetilde M_\eta$ by
\begin{equation}
{}^{\varepsilon_k} \widetilde B_\eta:= 
\{ b \cdot v_{\varepsilon_k+} \ |\  b \in B_{\eta-\varepsilon_k} \}.
\end{equation}
Using
$(v_{\varepsilon_k+}, v_{\varepsilon_k+} ) =s_k$, 
\begin{equation} \label{newdet}
\det (\widetilde M^\cR  \otimes V)_{D_\eta}=
 \prod_{k=1}^N s_k^{\dim( {}^{\varepsilon_k} \widetilde M^\cR_\eta)} \det {}^{\varepsilon_k} \widetilde M^\cR_{({}^{\varepsilon_k} \widetilde B_{\eta})} =
\prod_{k=1}^N s_k^{p(\eta-\varepsilon_k)}
\sigma_{\varepsilon_k}(\det \widetilde M^\cR_{\widetilde B_{\eta-\varepsilon_k}}),  
\end{equation}
where the last equality uses Proposition \ref{mu-s}. Here, as in Section \ref{SD}, $\det {}^{\varepsilon_k} \widetilde M^\cR_{({}^{\varepsilon_k} \widetilde B_{\eta})}$ is the Shapovalov determinant calculated with respect to the basis ${}^{\varepsilon_k} \widetilde B_{\eta}$.

The change of basis from $A_\eta$ to $C_\eta$ is unitriangular and 
the change of basis from $C_\eta$ to $D_\eta$ is unitriangular. 
Thus $\det (\widetilde M^\cR \otimes_\cA V)_{A_\eta } =\det (\widetilde M^\cR \otimes_\cA V)_{D_\eta }$, and so the right sides of \eqref{det2} and \eqref{newdet} are equal. The lemma follows from this equality by rearranging.  
 \end{proof}

\begin{Lemma} \label{get_a_hold} Up to multiplication by a power of $q$, 
\begin{equation} s_k = \prod_{1 \leq j < k} \left(  
\frac{z_j^{}z_k^{-1}- q^{2 +2j-2k}z_j^{-1}z_k^{}}
{\sigma_{\varepsilon_j}\left( z_j^{}z_k^{-1}- q^{2 +2j-2k}z_j^{-1}z_k^{} \right)} 
\right).
\end{equation}
\end{Lemma}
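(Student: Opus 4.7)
The plan is to prove the formula by induction on $k$. The base case $k = 1$ is immediate from $v_{\varepsilon_1+} = v_+ \otimes v_1$, which has norm $(v_+,v_+)(v_1,v_1) = 1$ and matches the empty product. For the inductive step, specialize Lemma \ref{equal-dets} to $\eta = \varepsilon_k$. Since $\varepsilon_k - \varepsilon_j \notin Q^-$ for $j > k$ we have $p(\varepsilon_k - \varepsilon_j) = 0$ there, while for $j = k$ one has $p(0) = 1$ and $\det \widetilde M^\cR_0 = 1$; thus the resulting identity isolates a single factor of $s_k$ on the left and reads
\begin{equation*}
s_k \prod_{j < k} s_j^{p(\varepsilon_k - \varepsilon_j)} = \prod_{j \leq k} \frac{\det \widetilde M^\cR_{\varepsilon_k - \varepsilon_j}}{\sigma_{\varepsilon_j}\det \widetilde M^\cR_{\varepsilon_k - \varepsilon_j}}
\end{equation*}
up to a power of $q$.

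Next, write $g_{i,i',m} := z_i z_{i'}^{-1} - q^{2m + 2i - 2i'} z_i^{-1} z_{i'}$ and apply Theorem \ref{fulldet}. The easily verified identities $\sigma_{\varepsilon_j}(g_{j,j',m}) = q\,g_{j,j',m-1}$ for $j < j'$, $\sigma_{\varepsilon_j}(g_{i,j,m}) = q^{-1}g_{i,j,m+1}$ for $i < j$, and $\sigma_{\varepsilon_j}(g_{i,i',m}) = g_{i,i',m}$ for $j \notin \{i,i'\}$ show that each ratio $\det \widetilde M^\cR_{\varepsilon_k - \varepsilon_j}/\sigma_{\varepsilon_j}\det\widetilde M^\cR_{\varepsilon_k - \varepsilon_j}$ collapses to a product involving only factors whose indices contain $j$. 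A direct calculation shows that the exponent $p(\varepsilon_k - \varepsilon_j + m(\varepsilon_i - \varepsilon_{i'}))$ of $g_{i, i', m}$ in $\det \widetilde M^\cR_{\varepsilon_k - \varepsilon_j}$ is nonzero only when $m = 1$ and $j \leq i < i' \leq k$, in which case the inner weight equals $-(\varepsilon_j - \varepsilon_i) - (\varepsilon_{i'} - \varepsilon_k)$; these two summands have disjoint support on the simple roots, so in type $A$ the Kostant partition function factorizes as $p(\varepsilon_j - \varepsilon_i)\cdot p(\varepsilon_{i'} - \varepsilon_k)$. Requiring the index to contain $j$ then forces $i = j$, and ratio $j$ simplifies to $\prod_{j < j' \leq k} h_{j, j'}^{p(\varepsilon_k - \varepsilon_{j'})}$ where $h_{i,i'} := g_{i,i',1}/\sigma_{\varepsilon_i}(g_{i,i',1})$.

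Assembling all ratios, the right-hand side of the displayed identity becomes $\prod_{1 \leq i < i' \leq k} h_{i,i'}^{p(\varepsilon_k - \varepsilon_{i'})}$. Substituting the inductive formulas $s_j = \prod_{i<j} h_{i, j}$ for $j < k$ into the left-hand side gives $s_k \cdot \prod_{1 \leq i < j < k} h_{i, j}^{p(\varepsilon_k - \varepsilon_j)}$, and dividing the two double products leaves precisely the $i' = k$ factors, so $s_k = \prod_{i < k} h_{i, k}$ up to a power of $q$, as claimed. The main obstacle is the combinatorial bookkeeping in identifying the surviving exponents; it rests on the type-$A$ Kostant partition function factorization on disjoint-support weights, together with careful tracking of $q$-powers in the Shapovalov determinant.
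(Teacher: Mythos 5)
Your proposal is correct and takes essentially the same route as the paper's proof: specialize Lemma \ref{equal-dets} at $\eta=\varepsilon_k$, apply Theorem \ref{fulldet}, observe that the exponents $p(\varepsilon_k-\varepsilon_j+m\varepsilon_i-m\varepsilon_{i'})$ vanish unless $m=1$ and $j\le i<i'\le k$, cancel the factors fixed by $\sigma_{\varepsilon_j}$ so that only $i=j$ survives, and induct on $k$ starting from $s_1=1$ (the paper also notes explicitly that the units $c_\eta$ from Theorem \ref{fulldet} only contribute powers of $q$, a point you leave implicit). One small slip: since $p$ is supported on $Q^-$, your factorization aside should read $p(\varepsilon_i-\varepsilon_j)\,p(\varepsilon_k-\varepsilon_{i'})$ rather than $p(\varepsilon_j-\varepsilon_i)\,p(\varepsilon_{i'}-\varepsilon_k)$, but the exponent $p(\varepsilon_k-\varepsilon_{j'})$ you actually use is the correct one, so the argument stands.
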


\begin{proof}

Fix $1 \leq k \leq N$. Setting $\eta=\varepsilon_k$ in Lemma \ref{equal-dets} 
and applying Theorem \ref{fulldet} we see that, up to multiplication by a power of $q$,
\begin{equation}
\begin{aligned} 
 \prod_{1 \leq x \leq N} s_x^{p(\varepsilon_k-\varepsilon_x)}  &=  \hspace{-0.3cm}
 \prod_{1 \leq x \leq N} \frac{\det \widetilde M^\cR_{\varepsilon_k-\varepsilon_x}}{\sigma_{\varepsilon_x} \det \widetilde M^\cR_{\varepsilon_k-\varepsilon_x}} \\
 &=
  \hspace{-0.3cm}  
\prod_{1 \leq x \leq N} \prod_{\tiny \begin{array}{c} 1 \leq i < j \leq N \\ m >0 \end{array}}  \hspace{-0.4cm}  \left( \hspace{-0.1cm} \frac{c_{\varepsilon_k-\varepsilon_x} \left(z_i^{}z_j^{-1}- q^{2m+2i-2j } z_i^{-1}z_j^{}\right)}{\sigma_{\varepsilon_x}(c_{\varepsilon_k-\varepsilon_x}) \sigma_{\varepsilon_x} \left( z_i^{}z_j^{-1} - q^{2m+2i-2j } z_i^{-1}z_j^{}\hspace{-0.1cm}   \right) } \right)^{ \hspace{-0.15cm} p(\varepsilon_k-\varepsilon_x + m\varepsilon_i-m\varepsilon_j)} \hspace{-1.8cm},
\end{aligned}
\end{equation}
where, for each $ 1 \leq x \leq N$, $c_{\varepsilon_k-\varepsilon_x}$ is a unit in 
$\bq(q)[z_1^{\pm1}, \ldots, z_N^{\pm1}]$.
The value $p(\varepsilon_k-\varepsilon_x+m\varepsilon_i-m\varepsilon_j)$ is $0$ unless $m=1$ and $x \leq i < j \leq k$. 
If $i>x$, then $\sigma_{\varepsilon_x}$ acts as the identity on $z_iz_j^{-1} - q^{2+2i-2j}z_i^{-1}z_j$, so the corresponding factors in the numerator and denominator cancel. Hence we need only consider factors on the right hand side where $m=1$, $i=x$, and $x< j \leq k$. If $x >k$ then $\varepsilon_k-\varepsilon_x \not\in Q^-$, and hence $p(\varepsilon_k-\varepsilon_x)=0$, so on the left hand since we only need to consider those factors where $1 \leq x \leq k$. Up to multiplication by a power of $q$, the expression reduces to
\begin{equation} \label{confusing-product}
\begin{aligned} 
 \hspace{-0.3cm}
 \prod_{1 \leq x \leq k} s_x^{p(\varepsilon_k-\varepsilon_x)}  &=
  \hspace{-0.3cm}
 \prod_{1 \leq x < k} 
  \hspace{-0.1cm}
 \displaystyle \left( \frac{c_{\varepsilon_k-\varepsilon_x}}{\sigma_{\varepsilon_x}(c_{\varepsilon_k- \varepsilon_x})} \right)^{p(\varepsilon_k-\varepsilon_j)} 
 \hspace{-0.3cm}
 \prod_{x< j \leq k}
  \hspace{-0.1cm}
  \left( 
 \frac{z_x^{}z_j^{-1}- q^{2 +2x-2j}z_x^{-1}z_j^{}}
 {\sigma_{\varepsilon_x} \left( z_x^{}z_j^{-1}- q^{2 +2x-2j}z_x^{-1}z_j^{} \right)} 
 \right)^{p(\varepsilon_k-\varepsilon_j)} 
 \hspace{-0.7cm}  \\ 
 & =  \hspace{-0.2cm}  \prod_{1 < j \leq k}  \left(  \prod_{1 \leq x < j}\frac{z_x^{}z_j^{-1}- q^{2 +2x-2j}z_x^{-1}z_j^{}}{{\sigma_{\varepsilon_x}} \left( z_x^{}z_j^{-1}- q^{2 +2x-2j}z_x^{-1}z_j^{} \right)} \right)^{p(\varepsilon_k-\varepsilon_j)}  \hspace{-0.1cm}.
\end{aligned}
\end{equation}
The last two expressions are equal because they are each a product over 
pairs $(x,j)$ with $1 \leq x<j \leq k$, and the factors of 
$ \displaystyle \frac{c_{\varepsilon_k-\varepsilon_x}}
{\sigma_{\varepsilon_x}(c_{\varepsilon_k- \varepsilon_x})}$ have been dropped because they are powers of $q$. Using the fact that $s_1=1$ and making the change of variables 
$j \rightarrow x$ and $x \rightarrow j$ on the right side, \eqref{confusing-product} becomes
\begin{equation}
\begin{aligned} 
 \prod_{1 < x \leq k} s_x^{p(\varepsilon_k-\varepsilon_x)}  
& =  \hspace{-0.2cm}  \prod_{1 < x \leq k}  \left(  
\prod_{1 \leq j < x}\frac{z_j^{}z_x^{-1}- q^{2 +2j-2x}z_j^{-1}z_x^{}}
{\sigma_{\varepsilon_j} \left( z_j^{}z_x^{-1}- q^{2 +2j-2x}z_j^{-1}z_x^{} \right)} 
\right)^{p(\varepsilon_k-\varepsilon_x)}  \hspace{-0.1cm}.
\end{aligned}
\end{equation}
For $k \geq 2$, the lemma now follows by induction. For $k=1$ the result simply says that $s_1=1$, which we already know.
\end{proof}

\begin{figure}
\setlength{\unitlength}{0.4cm}
\begin{center}
\begin{picture}(26,17)

\put(3.65,9.8){$g_1$}
\put(4.65,10.8){$g_2$}
\put(7.65,9.8){$g_3$}
\put(8.65,10.8){$g_4$}
\put(9.65,11.8){$g_5$}
\put(12.65,10.8){$g_6$}
\put(13.65,11.8){$g_7$}
\put(14.65,12.8){$g_8$}
\put(15.65,13.8){$g_9$}
\put(21.45,9.8){$g_{10}$}
\put(22.45,10.8){$g_{11}$}

\put(3,10){\begin{picture}(1,2)
\put(0,2){\line(1,-1){1}}
\put(0,2){\line(-1,-1){1}}
\put(-0.4,0.75){$a_1$}
\end{picture}}

\put(7,10){\begin{picture}(1,2)
\put(0,2){\line(1,-1){1}}
\put(0,2){\line(-1,-1){1}}
\put(-0.4,0.75){$a_3$}
\end{picture}}

\put(12,11){\begin{picture}(1,2)
\put(0,2){\line(1,-1){1}}
\put(0,2){\line(-1,-1){1}}
\put(-0.3,0.75){$b$}
\end{picture}}

\thinlines

\put(12,1){\line(1,1){11}}
\put(11,2){\line(1,1){9}}
\put(10,3){\line(1,1){9}}
\put(9,4){\line(1,1){9}}
\put(8,5){\line(1,1){9}}
\put(7,6){\line(1,1){5}}
\put(6,7){\line(1,1){5}}
\put(5,8){\line(1,1){2}}
\put(4,9){\line(1,1){2}}

\put(14,1){\line(-1,1){10}}
\put(15,2){\line(-1,1){8}}
\put(16,3){\line(-1,1){8}}
\put(17,4){\line(-1,1){8}}
\put(18,5){\line(-1,1){6}}
\put(19,6){\line(-1,1){6}}
\put(20,7){\line(-1,1){6}}
\put(21,8){\line(-1,1){6}}
\put(22,9){\line(-1,1){1}}
\put(23,10){\line(-1,1){1}}
\thicklines

\put(13,0){\line(1,1){13}}
\put(13,0){\line(-1,1){13}}

\put(24,11){\line(-1,1){1}}
\put(23,12){\line(-1,-1){2}}
\put(21,10){\line(-1,1){5}}
\put(16,15){\line(-1,-1){4}}
\put(12,11){\line(-1,1){2}}
\put(10,13){\line(-1,-1){3}}
\put(7,10){\line(-1,1){2}}
\put(5,12){\line(-1,-1){2}}

\put(13,0.05){\line(1,1){13}}
\put(13,0.05){\line(-1,1){13}}

\put(24,11.05){\line(-1,1){1}}
\put(23,12.05){\line(-1,-1){2}}
\put(21,10.05){\line(-1,1){5}}
\put(16,15.05){\line(-1,-1){4}}
\put(12,11.05){\line(-1,1){2}}
\put(10,13.05){\line(-1,-1){3}}
\put(7,10.05){\line(-1,1){2}}
\put(5,12.05){\line(-1,-1){2}}

\put(13,0.1){\line(1,1){13}}
\put(13,0.1){\line(-1,1){13}}

\put(24,11.1){\line(-1,1){1}}
\put(23,12.1){\line(-1,-1){2}}
\put(21,10.1){\line(-1,1){5}}
\put(16,15.1){\line(-1,-1){4}}
\put(12,11.1){\line(-1,1){2}}
\put(10,13.1){\line(-1,-1){3}}
\put(7,10.1){\line(-1,1){2}}
\put(5,12.1){\line(-1,-1){2}}

\put(10.5,10.75){{\tiny $[2]$}}
\put(9.5,9.75){{\tiny $[3]$}}
\put(8.5,8.75){{\tiny $[4]$}}
\put(7.5,7.75){{\tiny $[7]$}}
\put(6.5,6.75){{\tiny $[8]$}}

\put(12.6,-1){ $\tiny{{}_{0}}$}
\put(13.1,-1){ $\tiny{{}_{-1}}$}
\put(14.1,-1){ $\tiny{{}_{-2}}$}
\put(15.1,-1){ $\tiny{{}_{-3}}$}
\put(16.1,-1){ $\tiny{{}_{-4}}$}
\put(17.1,-1){ $\tiny{{}_{-5}}$}
\put(18.1,-1){ $\tiny{{}_{-6}}$}
\put(19.1,-1){ $\tiny{{}_{-7}}$}
\put(20.1,-1){ $\tiny{{}_{-8}}$}
\put(21.1,-1){ $\tiny{{}_{-9}}$}

\put(11.6,-1){ $\tiny{{}_{1}}$}
\put(10.6,-1){ $\tiny{{}_{2}}$}
\put(9.6,-1){ $\tiny{{}_{3}}$}
\put(8.6,-1){ $\tiny{{}_{4}}$}
\put(7.6,-1){ $\tiny{{}_{5}}$}
\put(6.6,-1){ $\tiny{{}_{6}}$}
\put(5.6,-1){ $\tiny{{}_{7}}$}
\put(4.6,-1){ $\tiny{{}_{8}}$}
\put(3.6,-1){ $\tiny{{}_{9}}$}

\end{picture}

$\mbox{}$

\end{center}

\caption{
The partition enclosed by the thick lines is $\lambda=(10, 10, 8, 8, 8, 6, 6, 6, 6,$ $1, 1)$. 
If $k=6$ then $A(\lambda, <6)= \{ a_1, a_3 \}$,  $R(\lambda, <6)= \{g_2, g_5\}$, and  
$$\hspace{-0.7in} \ev_\lambda(s_6)= \frac{[2]}{[3]}\frac{[3]}{[4]}\frac{[4]}{[5]} \frac{[7]}{[8]} \frac{[8]}{[9]} = \frac{[2][7]}{[5][9]}= \frac{[c(g_5)-c(b)][c(g_2)-c(b)]}{[c(a_3)-c(b)][c(a_1)-c(b)]}.
$$
The factors in the numerator of the first expression are displayed.  These are
the $q$-integers corresponding to the hook lengths of the boxes in the same column 
as the addable box $b$ in row 6.  
 \label{ar_fig}}
\end{figure}

\begin{Proposition} \label{the_vals1} 
Let $\lambda$ be a partition. Let $A(\lambda,<k)$ (resp. $R(\lambda, <k)$) be the set of boxes which can be added to (resp. removed from)  $\lambda$ on rows $\lambda_j$ with $j<k$ such that the result is still a partition. Let $b=(\lambda+\varepsilon_k) / \lambda$ and let 
$c(\cdot)$ be as in Figure \ref{partition_bij}. Then, up to multiplication by a power of $q$,
\begin{equation} \label{canceled_eq}
\ev_\lambda(s_k)= 
\begin{cases}
\displaystyle{  \frac{
{\prod_{r \in R(\lambda,<k)} [c(r)-c(b)] }} { {\prod_{a \in A(\lambda,<k) } [c(a)-c(b)]}} 
},
&\text{ if } \lambda+ \varepsilon_k \text{ is a partition}, \\
0, &\text{ if } \lambda+ \varepsilon_k \text{ is not a partition}. 
\end{cases}
\end{equation}

\end{Proposition}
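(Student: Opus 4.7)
The plan is to start directly from the explicit product formula for $s_k$ given by Lemma \ref{get_a_hold}, apply $\ev_\lambda$ factor by factor, then perform a telescoping cancellation whose survivors match the addable and removable boxes appearing in the statement.

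First, I would compute the action of $\sigma_{\varepsilon_j}$ on each factor. Since $\sigma_{\varepsilon_j}(z_i) = q^{\delta_{ij}} z_i$, one has
\[
\sigma_{\varepsilon_j}\bigl( z_j z_k^{-1} - q^{2+2j-2k} z_j^{-1} z_k \bigr) = q z_j z_k^{-1} - q^{1+2j-2k} z_j^{-1} z_k.
\]
Evaluating each of these two expressions at $\lambda$ (so $z_i \mapsto q^{\lambda_i}$) and pulling out a common factor of $q^{j-k+1}(q-q^{-1})$ converts the numerator into $(q-q^{-1}) q^{j-k+1}\bigl[(\lambda_j-j)-(\lambda_k-k)-1\bigr]$ and the denominator into $(q-q^{-1}) q^{j-k+1}\bigl[(\lambda_j-j)-(\lambda_k-k)\bigr]$. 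Up to a power of $q$, this gives
\[
\ev_\lambda(s_k) \;=\; \prod_{j=1}^{k-1} \frac{\bigl[(\lambda_j - j)-(\lambda_k-k)-1\bigr]}{\bigl[(\lambda_j - j)-(\lambda_k-k)\bigr]}.
\]

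Next I would handle the vanishing case: if $\lambda+\varepsilon_k$ is not a partition then $\lambda_{k-1} = \lambda_k$, so the $j = k-1$ numerator becomes $[0] = 0$ and the whole product vanishes, matching the second case of \eqref{canceled_eq}. Otherwise $\lambda_{k-1} > \lambda_k$, and I would carry out the telescoping: writing $a_j = (\lambda_j-j)-(\lambda_k-k)$, the numerator $[a_j-1]$ of the $j$-th factor equals the denominator $[a_{j+1}]$ of the $(j+1)$-th factor precisely when $\lambda_j = \lambda_{j+1}$, i.e.\ precisely when row $j$ has no removable box and (equivalently) row $j+1$ has no addable box. After all such cancellations, the surviving numerator factors are indexed by rows $j<k$ bearing a removable box, and the surviving denominator factors are indexed by rows $j<k$ bearing an addable box (the indices $j=k-1$ and $j=1$ automatically survive as boundary terms, corresponding respectively to the removable box on row $k-1$, which exists because $\lambda_{k-1}>\lambda_k$, and the addable box on row $1$, which always exists).

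Finally I would identify each surviving factor with the appropriate content difference. Since the added box $b$ lies in position $(k,\lambda_k+1)$ with content $c(b) = \lambda_k + 1 - k$, the removable box on row $j$ (at position $(j,\lambda_j)$) has content $\lambda_j - j$, and the addable box on row $j$ (at position $(j,\lambda_j+1)$) has content $\lambda_j + 1 - j$, one checks directly that $[a_j - 1] = [c(r)-c(b)]$ and $[a_j] = [c(a)-c(b)]$ with $r$ and $a$ the removable/addable boxes on row $j$. This produces the stated formula. The main bookkeeping obstacle is making the cancellation argument unambiguous—verifying that every row $j<k$ contributes exactly one of a surviving numerator, a surviving denominator, or a cancellation with its neighbor, with no double counting; a short case analysis on the two inequalities $\lambda_{j-1} \gtreqless \lambda_j$ and $\lambda_j \gtreqless \lambda_{j+1}$ resolves this.
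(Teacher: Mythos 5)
Your proposal is correct and follows essentially the same route as the paper: start from the product formula of Lemma \ref{get_a_hold}, evaluate each factor at $\lambda$ to get $[c(g_j)-c(b)]/[c(g_j)-c(b)+1]$ (with $g_j$ the last box of row $j$), observe the vanishing exactly when $\lambda_{k-1}=\lambda_k$, and cancel adjacent factors whenever $\lambda_j=\lambda_{j+1}$. Your write-up simply supplies the "simple calculation from definitions" and the telescoping bookkeeping that the paper leaves implicit, and both check out.
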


\begin{proof} For $1 \leq j \leq N$, let $g_j$ be the last box in row $j$ of $\lambda$. By Lemma \ref{get_a_hold}, up to multiplication by a power of $q$,
\begin{equation} 
\ev_\lambda^{}(s_k)= \ev_\lambda^{} \left(  \prod_{1 \leq j < k} \frac{z_jz_k^{-1}- q^{2 +2j-2k}z_j^{-1}z_k}{\sigma_{\varepsilon_j} ( z_jz_k^{-1}- q^{2 +2j-2k}z_j^{-1}z_k  )} \right)^{}=
 \prod_{1 \leq j < k}  \frac{
 [c(g_j)-c(b)]}{[c(g_j)-c(b)+1]},
\end{equation}
where the last equality is a simple calculation from definitions. The denominator on the right side is never zero, and the numerator is zero exactly when $\lambda_k=\lambda_{k-1}$, so that $\lambda+\varepsilon_k$ is no longer a partition. If $\lambda_j =\lambda_{j+1}$ for any $j<k$, then there is cancellation, giving \eqref{canceled_eq}. See Figure \ref{ar_fig}.
\end{proof}

\begin{Proposition} \label{the_vals2} 
Let $N_{\bar j}^l(\mu / \lambda)$ be as in Section \ref{MM_section}.
For any partition $\lambda$, 
\begin{equation}
\begin{cases}
val_{\phi_{2\ell}} \ev_\lambda(s_k)= N_{\bar i}^l(\mu / \lambda), 
& \hspace{-0.35cm} \text{ if } \mu =\lambda+\varepsilon_k \text{ is a partition, and } \mu / \lambda \text{ is an } {\bar i} \text{ colored box},\\
\ev_\lambda(s_k)=0, &  \hspace{-0.35cm}  \text{ otherwise}.
\end{cases}
\end{equation}
\end{Proposition}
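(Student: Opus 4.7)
The plan is to reduce everything to the explicit rational formula furnished by Proposition \ref{the_vals1} and then identify the resulting counts with those appearing in $N_{\bar i}^l(\mu/\lambda)$. The second case ($\lambda+\varepsilon_k$ is not a partition) is immediate from Proposition \ref{the_vals1}, so I focus on the main case: $\mu=\lambda+\varepsilon_k$ is a partition with $b=\mu/\lambda$ of color $\bar i$.

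Applying $\val_{\phi_{2\ell}}$ to the formula
\begin{equation*}
\ev_\lambda(s_k) \;=\; (\text{power of } q)\cdot\frac{\prod_{r\in R(\lambda,<k)}[c(r)-c(b)]}{\prod_{a\in A(\lambda,<k)}[c(a)-c(b)]}
\end{equation*}
from Proposition \ref{the_vals1} kills the leading power of $q$ (which has $\phi_{2\ell}$-valuation $0$) and reduces the problem to computing $\val_{\phi_{2\ell}}[n]$ for nonzero $n\in\ZZ$. The standard factorization $q^{2n}-1=\prod_{d\mid 2n}\phi_d(q)$ together with $q^2-1=\phi_1(q)\phi_2(q)$ gives $[n]=q^{1-n}\prod_{d\mid 2n,\ d>2}\phi_d(q)$, so (using $\ell\ge 2$) $\val_{\phi_{2\ell}}[n]=1$ exactly when $\ell\mid n$ and equals $0$ otherwise. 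The proof of Proposition \ref{the_vals1} shows that $c(r)-c(b)$ and $c(a)-c(b)$ are strictly positive (in particular nonzero), so
\begin{equation*}
\val_{\phi_{2\ell}}\ev_\lambda(s_k) \;=\; \#\{r\in R(\lambda,<k):\bar c(r)=\bar i\}\;-\;\#\{a\in A(\lambda,<k):\bar c(a)=\bar i\}.
\end{equation*}

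The final step is to identify these counts with the ones defining $N_{\bar i}^l(\mu/\lambda)$: the removable (resp.\ addable) boxes of $\lambda$ of color $\bar i$ on rows $j<k$ coincide with those of color $\bar i$ lying to the left of $b$. In the convention of Figure \ref{partition_bij} content increases as one moves leftward, so ``to the left of $b$'' means strictly greater content. A direct comparison using $c(r)=\lambda_j-j$, $c(a)=\lambda_j+1-j$, and $c(b)=\lambda_k+1-k$, combined with the inequalities $\lambda_j>\lambda_k$ (if $r$ is removable on row $j<k$) and $\lambda_j\le\lambda_k$ (if $j>k$), confirms that for $j<k$ the box lies strictly to the left of $b$, for $j>k$ strictly to the right, and on row $j=k$ the only addable box is $b$ itself and any removable box lies strictly to the right. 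This bookkeeping is where all the convention-matching is concentrated and is the only slightly delicate part of the argument; beyond it, the proof is a direct specialization of Proposition \ref{the_vals1}.
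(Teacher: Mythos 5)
Your proposal is correct and follows essentially the same route as the paper: specialize Proposition \ref{the_vals1}, compute $\val_{\phi_{2\ell}}[n]$ via the cyclotomic factorization (so it is $1$ iff $\ell\mid n$ and never $2$), and identify the color-$\bar i$ boxes of $A(\lambda,<k)$ and $R(\lambda,<k)$ with the addable and removable $\bar i$-boxes to the left of $\mu/\lambda$. Your extra checks (strict positivity of the content differences and the row-versus-left/right bookkeeping) are correct and merely make explicit what the paper's proof leaves implicit.
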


\begin{proof} By Proposition \ref{the_vals1}, $\ev_\lambda(s_k)=0$ if $\lambda+\varepsilon_k$ is not a partition. If $\lambda+\varepsilon_k$ is a partition then
\begin{equation}
\begin{aligned}
\{ b \in A(\lambda, <k) : \bar c(b) & = \bar c(\mu /\lambda) \}
= \{ b \in A_{\bar i}(\lambda) \ |\  b \text{ is to the left of } \mu/\lambda \}, \quad \text{and}  
\\
\{ b \in R(\lambda, <k) : \bar c(b) & = \bar c(\mu /\lambda) \}
= \{ b \in R_{\bar i}(\lambda)  \ |\  b \text{ is to the left of } \mu/\lambda \},
\end{aligned}
\end{equation}
where the notation is as in Section \ref{MM_section}.  Since 
\begin{equation}
[x] = \frac{q^x-q^{-x}}{q-q^{-1}}= q^{-x} (q-q^{-1})^{-1} \prod_{d | 2x} \phi_d, 
\end{equation}
$[x]$ is divisible by $\phi_{2\ell}$ if and only if $x$ is divisible by $\ell$, and $[x]$ is never divisible by $\phi_{2\ell}^2$. The result now follows from Proposition \ref{the_vals1}.
\end{proof}

\begin{proof}[Proof of Theorem \ref{main}]
Fix $\lambda$ and $1 \leq k \leq m_\lambda$. From definitions, $(\ev_\lambda\otimes 1)v_{\varepsilon_{k_j} \hspace{-0.08cm}+}=v_{\mu^{(j)}}$. Thus, using \eqref{ev-tensor},
\begin{align}
r_{j}(\lambda) =(v_{\mu^{(j)}}, v_{\mu^{(j)}})  = ((\ev_\lambda \otimes 1)v_{\varepsilon_{k_j} \hspace{-0.08cm}+}, (\ev_\lambda \otimes 1)v_{\varepsilon_{k_j} \hspace{-0.08cm}+}) = \ev_\lambda (v_{\varepsilon_{k_j} \hspace{-0.08cm}+}, v_{\varepsilon_{k_j} \hspace{-0.08cm}+}) = \ev_\lambda(s_{k_j}).
\end{align}
The result now follows from Proposition \ref{the_vals2}.
\end{proof}

\def\cprime{$'$} \def\cprime{$'$} \def\cprime{$'$} \def\cprime{$'$}
  \def\cprime{$'$} \def\cprime{$'$}

\end{document}